\newcommand{\uno}{ 1\!\!1}
\newcommand{\Pb}{{\bf P}_{c,\delta}}
\newcommand\RR{{\mathbb R}}
\newcommand\ZZ{{\mathbb Z}}
\numberwithin{equation}{section}
\newcommand{\supp}{\mathrm{supp}}
\newcommand{\suppF}{\supp(F)}
\newcommand{\impl}{\Rightarrow}
\newtheoremstyle{plainSmallRoman}
  {3pt}{10pt}
  {\normalfont\normalfont}
  {}
  {\bfseries}
  {.}
  {.5em}
  {}
\theoremstyle{plainSmallRoman}
\newtheorem{theorem}{Theorem}
\newtheorem{proposition}[theorem]{Proposition}%
\newtheorem{example}{Example}%
\newtheorem{remark}{Remark}%
\newtheorem{lemma}{Lemma}
\newtheorem{corollary}{Corollary}
\newtheorem{definition}{Definition}%
\begin{document}

\title[Characterisation of distributions via record-like observations]{Characterisation of distributions via record-like observations}

\author[1]{\fnm{Ra\'ul} \sur{Gouet}}\email{rgouet@dim.uchile.cl}

\author*[2,3]{\fnm{Miguel} \sur{Lafuente}}\email{miguellb@unizar.es}

\author[2,3]{\fnm{F. Javier} \sur{L\'opez}}\email{javier.lopez@unizar.es}

\author[2,3]{\fnm{Gerardo} \sur{Sanz}}\email{gerardo.sanz@unizar.es}

\affil[1]{\orgdiv{Departamento de Ingenier\'ia Matem\'atica},
  \orgname{Universidad de Chile},
  \orgaddress{\street{Av. Beauchef 851}, \city{Santiago}, \postcode{8370456}, \country{Chile}}}

\affil*[2]{\orgdiv{Departamento de M\'etodos Estad\'isticos},
  \orgname{Universidad de Zaragoza},
  \orgaddress{\street{C/ Pedro Cerbuna, 12}, \city{Zaragoza}, \postcode{50009}, \country{Spain}}}

\affil[3]{\orgdiv{Instituto de Biocomputaci\'on y F\'isica de Sistemas Complejos (BIFI)},
  \orgname{Universidad de Zaragoza},
  \orgaddress{\street{C/ Mariano Esquillor s/n, Edificio I+D, Campus R\'io Ebro}, \city{Zaragoza}, \postcode{50018}, \country{Spain}}}

\abstract{\unboldmath We characterise probability distributions via a martingale property associated with a natural generalisation of record values, known as $\delta$-records. For an independent and identically distributed sequence $(X_n)$ with running maximum $M_n$, let $N_n$ be the number of $\delta$-records (those $X_k$ with $X_k>M_{k-1}+\delta$). We determine distributions for which $N_n-cM_n$ is a martingale, and show that this property uniquely determines the underlying distribution within broad classes.

We show that the problem can be reformulated in terms of a delay-integrated Cauchy functional equation. A distinctive feature of this equation is that it is required to hold on a set that depends on the unknown distribution itself, which both complicates the analysis and allows for a rich variety of solutions.

A complete characterisation is obtained when $\delta<0$.  For $\delta>0$, all solutions with bounded support are identified. In the case of $\delta>0$ and unbounded support, we consider both continuous and lattice distributions. In the continuous case, the characterisation reduces to a delay differential equation,  which admits classical exponential-type solutions as well as broader  families, including mixtures of exponential and gamma distributions. An analogous discrete analysis leads to difference equations whose solutions include mixtures of geometric and negative binomial distributions. In particular, this yields a new characterisation of the geometric distribution based on weak records.}

\keywords{Characterisation of distributions, Martingales, Records, $\delta$-records}

\pacs[MSC Classification]{60G70, 60G42, 62E10, 34K06}

\maketitle

\section{Introduction and Preliminaries}
\label{sec:intro}

The characterisation of probability distributions is a well-established area of probability theory, with hundreds of papers and several monographs devoted to identifying properties that determine families of distributions. Beyond their role in classical extreme value theory, record values and related statistics, such as $k$-records or weak records, have been extensively used to characterise probability distributions. For example, Chapter~4 of the monograph~\cite{ABN} on record theory is devoted to distributional characterisations, while Chapter~5 of~\cite{A2017}, a monograph on general characterisations of univariate distributions, focuses specifically on records.

This record-based perspective remains an active area of research. For continuous distributions, the characterisation of symmetric distributions based on records and $k$-records is analysed in~\cite{Ahmadi2020,Ahmadi2021,GC}. For lifetime models, results in~\cite{Lee2020} show that the Weibull distribution can be characterised by the independence between a ratio of two record values and another record value, while in~\cite{KhanMustafa2024} the Rayleigh family of distributions is characterised by a property related to the conditional expectation of records.  
In the discrete case, recent work has focused mainly on the geometric distribution. This includes characterisations based on the equality in distribution between a weak record value and the sum of previous record values, as in~\cite{CastanoLopezSalamanca2013}; between a weak record value and the sum of previous (original) observations, as in~\cite{AhsanullahAliev2011}; between the sum of the first two record values and a particular linear combination of the observations, as in~\cite{AV}; or, as in~\cite{Jasinski2018,Jasinski2019}, characterisations in terms of the independence of the increments of $k$-records or weak $k$-records and the $k$-records themselves. Other discrete families, such as the discrete Pareto distribution, have also been characterised through properties of $k$-records; see~\cite{OncelAliev2016}. It is worth noting that several of these problems are closely related to the existence and uniqueness of solutions to difference, differential, or functional equations; see~\cite{Bieniek2026,BieniekMaciag2018,Shanbhag2021}.

In this paper, we study the characterisation of distributions through a natural generalisation of record values, called $\delta$-records. Given a sequence $(X_n)$ of random observations, we say that $X_n$ is a $\delta$-record if $X_n > M_{n-1} + \delta$, where $\delta$ is a real constant and $M_n=\max\{X_1,\ldots,X_n\}$; see \cite{GLS07b,GLS14,LS} for definitions, properties and applications of $\delta$-records. When $\delta=0$, $\delta$-records coincide with ordinary (upper) records. The problem we address is the following: given $\delta \ne 0$ and a sequence $(X_n)$ of independent random variables with common distribution function $F$, find all distributions $F$ such that the sequence $(N_n - cM_n)$ is a martingale (with respect to the natural filtration generated by the observations $X_n$), where $N_n$ denotes the number of $\delta$-records among the first $n$ observations. The corresponding problem when $\delta=0$, that is, when $N_n$ is the number of usual records, was completely solved in \cite{GLS07a}, through the study of an integrated Cauchy equation where the domain is the support of the distribution.

The motivation for studying this problem is related to the limit behaviour of $N_n$ as $n\to\infty$. Indeed, it was shown in~\cite{GLS07b,GLS12b} that, for geometrically distributed observations, there exists $c>0$ such that $(N_n - cM_n)$ is a martingale. This property was then used to establish normal convergence for the sequence $(N_n)$, among other asymptotic results. It was therefore natural to conjecture that this is a characteristic property of the geometric distribution among discrete models and, possibly, of the exponential distribution among continuous models.

As we shall see, our problem is equivalent to solving the following delay-integrated Cauchy functional equation:
\begin{equation*}
	1 - F(x+\delta) = c \int_x^\infty (1-F(t))\,dt, \qquad \text{for all } x \in T,
\end{equation*}
where $T$ is a certain set with $\int_T dF(x)=1$. Unlike most papers on distributional characterisations, which typically treat either the continuous or the discrete case, we address the problem for general distribution functions, although the continuous and discrete settings are studied in detail. As in~\cite{GLS07a}, the exponential and geometric distributions arise as solutions, but additional families appear as well.

Regarding our results, we completely solve the problem when $\delta<0$, providing explicit expressions for all solutions, which turn out to be discrete distributions. For lattice distributions on $\mathbb{Z}_+$, a solution exists if and only if $\delta\in[-1,0)$; in this case, the unique solution is given by the geometric distribution with parameter $c/(1+c)$. In particular, we obtain a new characterisation of the geometric distribution in terms of the number of weak records. 

When $\delta>0$, we give explicit expressions for all solutions with bounded support, which are shown to exist if and only if $c\delta<1$. The case of unbounded support is more challenging, and a complete description of the solution set appears to be out of reach. We therefore treat continuous and discrete distributions separately. In the continuous setting, the problem is reduced to the existence of positive solutions of a linear homogeneous delay differential equation (DDE). In fact, we establish a one-to-one correspondence between solutions to our problem and positive solutions of this DDE. We show that, besides particular cases of the exponential distribution, mixtures of exponential distributions, mixtures of exponential and gamma distributions, and many other families also solve the problem. The discrete case, with support on $\mathbb{Z}_+$, is treated analogously and reduced to the study of the positivity of solutions to a difference equation. Results parallel to those obtained in the continuous case are derived. In particular, mixtures of geometric distributions, as well as mixtures of geometric and negative binomial distributions, are shown to solve the problem.

The paper is organised as follows. Section~\ref{sec:intro} presents the introduction, statement of the problem, notations, definitions, and preliminary results. Section~\ref{sec:delta<0} focuses on the case $\delta<0$, while Section~\ref{sec:delta>0} analyses the case $\delta>0$. Illustrative examples are provided throughout the paper, and Appendix~\ref{sec:appendix} gathers essential technical definitions and results for completeness.

\subsection{Notation and definitions}
Let $(X_n)$ be a sequence of $\RR$-valued, integrable, independent and identically distributed (iid)  random variables, defined on a common probability space $(\Omega, \mathcal{F}, P)$, with non-degenerate distribution function $F$. The support of $F$ is defined as \begin{equation*}
	\suppF=\{x\in \mathbb{R}:	F(x+\epsilon)-F(x-\epsilon)>0,\ \forall \epsilon>0\},
\end{equation*}
 also characterised as the smallest closed set $A\subseteq\RR$, such that $F(A):=\int_AF(dx)=1$. The left  and right endpoints of $F$ are defined as $\alpha_F=\inf\{x\in\RR:F(x)>0\}=\inf\suppF$ and  $\omega_F=\sup\{x\in\RR:F(x)<1\}=\sup\suppF$, respectively. Clearly, $-\infty\le\alpha_F<\omega_F\le\infty$. Let $G=1-F$ be the survival function related to $F$.

Recall that, given an increasing family $(\mathcal{G}_n)$ of sub-sigma-algebras of $\mathcal{F}$ (also called a filtration), the random sequence $(Z_n)$ is said to be a $(\mathcal{G}_n)$-martingale if,  for all $n\ge 1$, $Z_n$ is integrable, $\mathcal{G}_n$-measurable and satisfies $E[Z_{n+1}|\mathcal{G}_n]=Z_n$.

Given $\delta\ne0$, we conventionally declare that $X_1$ is a $\delta$-record. For $n\ge2$, we say that  $X_n$ is a $\delta$-record if $X_n>M_{n-1}+\delta$, where $M_{n-1}=\max\{X_1,\ldots,X_{n-1}\}$. Let $I_1=1$ and, for $n\ge2$,  let $I_n=\uno_{\{X_n>M_{n-1}+\delta\}}$ denote the indicator variables of $\delta$-records. Finally, define $N_n=\sum_{k=1}^nI_k, n\ge1$, as the number of $\delta$-records among the first $n$ observations. 

For $n\ge1$, let $\mathcal{F}_{n}=\sigma(X_1,\ldots,X_n)$ be the $\sigma$-algebra generated by $X_1,\ldots,X_n$. Then, for $n\ge2$,
\begin{equation}\label{F1} E[I_n\;|\;\mathcal{F}_{n-1}]=
P[X_n>M_{n-1}+\delta\;|\;\mathcal{F}_{n-1}]=G(M_{n-1}+\delta),
\end{equation}
\begin{equation}\label{F2} E[M_n-M_{n-1}\;|\;\mathcal{F}_{n-1}]=
E[(X_n-M_{n-1})^+\;|\;\mathcal{F}_{n-1}]=
\int_{M_{n-1}}^\infty G(t)dt,
\end{equation}
where $a^+ = \max{\{a,0\}}$.

\begin{remark}The integrability of $X_1$ implies $\int_{x}^\infty G(t)dt<\infty$, for all $x\in\RR$ and, consequently,  \eqref{F2} is well defined. Equations \eqref{F1}, \eqref{F2} and similar expressions involving random variables, are understood in the almost sure sense.
\end{remark}

Our aim in this paper is to solve problem $\Pb$,  stated as follows: 
{\sl Given real constants $\delta\ne0$ and $c>0$, determine distribution functions $F$ on $\RR$, such that $(N_n - c M_n)$ is $(\mathcal{F}_n)$-martingale.} 

We identify $\Pb$ with its set of solutions. Specifically, if $F$ is a solution to $\Pb$, we write $F\in\Pb$ while if $\Pb$ has no solutions, we write $\Pb=\emptyset$.

The following definition and lemma are important because they allow to reformulate $\Pb$ as a functional equation.  
\begin{definition} \label{def:ST} Let $S=\{x\in\RR:H(x)=0\}$, where
	\begin{equation*}
	H(x)=G(x+\delta)-c\int_x^\infty G(t)dt, \quad  x\in\RR.
	\end{equation*}
	Let also $R=\{x\in\suppF: F(\{x\})=0, F(\{x+\delta\})>0\}$ and $T=\suppF\setminus R$.
\end{definition}

\begin{remark}\label{rem:T} $T$ is obtained by removing from $\suppF$ the set $R$ of all continuity points $x$ of $F$, such that $x+\delta$ is an atom. Clearly, $R$ contains no atoms and is countable, since it has at most as many elements as the atoms of $F$.
At first glance, this definition may seem strange, but it turns out that the martingale property depends on the behaviour of $F$ on $T$. This contrasts with the case $\delta=0$, analysed in \cite{GLS07a}, where the martingale condition was formulated as a property over the entirety of $\suppF$.
\end{remark}

\begin{lemma} 
	\label{L1} 
	Consider the statements
	\begin{enumerate}
		\item $F\in\Pb$.
		\item $F(S)=1$.
		\item $H(x)=0$, for all $x\in T$.
		\item $G(x+\delta)-G(y+\delta)=c\int_{x}^{y}G(t)dt, \,\,\text{ for all } x,y\in T$.
	\end{enumerate}
	Then the following assertions hold:
	\begin{enumerate}
		\item[(i)] $\mathrm{(a)}$, $\mathrm{(b)}$ and $\mathrm{(c)}$ are equivalent and imply $\mathrm{(d)}$.
		\item[(ii)] If $\delta>0$ then $\mathrm{(d)}$ implies $\mathrm{(c)}$.
		\item[(iii)] If $\delta<0$ and $\omega_F=\infty$ then $\mathrm{(d)}$ implies $\mathrm{(c)}$.
	\end{enumerate}
\end{lemma}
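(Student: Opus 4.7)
The plan is to establish (a)$\Leftrightarrow$(b)$\Leftrightarrow$(c) together with the easy direction (c)$\Rightarrow$(d) of part (i), then to recover (c) from (d) under the hypotheses of (ii) and (iii) by a limiting argument that pushes $y$ to the right end of the support.

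For (a)$\Leftrightarrow$(b), I would unwind the martingale condition: $E[(N_{n+1}-cM_{n+1})-(N_n-cM_n)\mid\mathcal{F}_n]=0$, combined with \eqref{F1}--\eqref{F2}, is exactly $H(M_n)=0$ almost surely for every $n\geq1$. The case $n=1$ is $H(X_1)=0$ a.s., i.e.\ $F(S)=1$; conversely, $F(S)=1$ implies $X_i\in S$ a.s.\ for every $i$, and since $M_n$ is literally one of $X_1,\ldots,X_n$, this forces $M_n\in S$ a.s.\ for all $n$ simultaneously.

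For (b)$\Leftrightarrow$(c), first observe that $R$ is countable (the map $x\mapsto x+\delta$ injects it into the atoms of $F$) and consists entirely of non-atoms, so $F(R)=0$ and $F(T)=1$; hence (c)$\Rightarrow$(b) is immediate. For (b)$\Rightarrow$(c), I would take $x\in T$ and split on the definition of $T$: if $F(\{x\})>0$ then $x\notin S$ would put the atom $\{x\}$ inside $S^c$, contradicting $F(S^c)=0$; if instead $F(\{x+\delta\})=0$, then $G(\cdot+\delta)$ is continuous at $x$, so $H$ is too, and since $x\in\suppF$ every neighbourhood of $x$ has positive $F$-measure, so $H(x)\neq0$ again contradicts $F(S^c)=0$. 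Finally, (c)$\Rightarrow$(d) follows by subtracting $H(x)=H(y)=0$ and using $\int_x^\infty G-\int_y^\infty G=\int_x^y G$.

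For (d)$\Rightarrow$(c) I would fix $x\in T$ and choose $y\in T$ with $G(y+\delta)$ arbitrarily small. When $\delta>0$ and $\omega_F<\infty$, the endpoint $\omega_F$ lies in $T$ because $\omega_F+\delta>\omega_F$ gives $F(\{\omega_F+\delta\})=0$; taking $y=\omega_F$ in (d) works directly since $G(\omega_F+\delta)=0$ and $\int_x^{\omega_F}G(t)dt=\int_x^\infty G(t)dt$. In the remaining cases ($\delta>0$ with $\omega_F=\infty$, or $\delta<0$ with $\omega_F=\infty$), $T$ is unbounded above because $\suppF$ is and $R$ is countable, so I would pick $y_n\in T$ with $y_n\to\infty$, use $G(y_n+\delta)\to0$ together with monotone convergence for the integral, and pass to the limit in (d). The main obstacle is the careful bookkeeping around $T$: its definition removes precisely the countable set $R$ where $H$ could jump over zero without $F$ being able to detect it, so that on $T$ either an atom of $F$ at $x$ or continuity of $H$ at $x$ does force $H(x)=0$. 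Note also that the excluded regime $\delta<0$ with $\omega_F<\infty$ in (iii) cannot be handled this way, as no $y\in T$ lies beyond $\omega_F$ to kill the boundary term, and indeed (d) turns out to be strictly weaker than (c) there.
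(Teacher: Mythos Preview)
Your proof is correct and follows essentially the same route as the paper's: the equivalences (a)$\Leftrightarrow$(b)$\Leftrightarrow$(c) via $H(M_n)=0$, the case split on atoms versus continuity of $H$ for (b)$\Rightarrow$(c), and the limiting argument sending $y$ to $\omega_F$ (or $y_n\to\infty$) for (d)$\Rightarrow$(c) all match the paper. The only point to tighten is the claim that $T$ is unbounded when $\omega_F=\infty$ ``because $\suppF$ is and $R$ is countable''---as pure set theory that is insufficient (a countable unbounded support could in principle sit entirely inside a countable $R$), but since you have already established $F(T)=1$, unboundedness of $T$ follows immediately from $\omega_F=\infty$, and the paper argues this same step by a slightly different dichotomy on whether the atoms of $F$ accumulate at infinity.
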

\begin{proof}
	(i) We begin by proving the equivalence between (a) and (b). Clearly, from (\ref{F1}) and (\ref{F2}), (a) is
	equivalent to $H(M_n)=0$, for $n\ge1$. In particular, (a) implies
	$H(X_1)=0$, which is equivalent to $F(S)=1$. For the converse, $F(S)=1$  means that $H(M_1)=H(X_1)=0$.  We proceed inductively noting that $H(X_n)=0$, for all $n\ge1$. If $H(M_n)=0$, then
	$H(M_{n+1})=H(X_{n+1})\uno_{\{X_{n+1}>M_{n}\}}+H(M_n)\uno_{\{X_{n+1}\le M_{n}\}}=0$, proving that $F\in\Pb$.
	
	We now check  the equivalence between (b) and (c). Suppose (b) holds; to prove (c) we consider first an atom $x\in\suppF$. Since $F(\{x\})>0$, it follows that $x\in S$; otherwise, $F(S)$ would be less than 1. On the other hand, if $x\in T$ is not an atom, then $x+\delta$ is not an atom (by the definition of $T$), so $H$ is continuous at $x$. As $x\in \suppF$, we have $F([x-1/n,x+1/n])>0$, for all $n\ge1$. Hence, there exist $x_n\in[x-1/n,x+1/n]$, such that $H(x_n)=0$, for all $n\ge1$, (since, otherwise, $F(S)<1$) and so,  by continuity of $H$ at $x$, we have that $H(x)=\lim_n H(x_n)=0$, which proves that (b) implies (c). 
	
	For the converse implication, note that (c) implies $T\subseteq S$. As $\suppF=T\cup R$,  it suffices to prove that $F(R)=0$ because this implies $F(T)=1$. This follows at once since, as commented in Remark \ref{rem:T}, $R$ contains no atoms and is countable. 
	Finally, it is clear that (d) follows from (c), by subtraction.
	
	(ii) Let $\delta>0$ and assume that (d) holds. We consider two situations depending on whether $\omega_F<\infty$ or $\omega_F=\infty$. In the first case, it is clear that $\omega_F\in T$ because $\omega_F+\delta$ is not an atom of $F$, as $\delta>0$. Let $x\in T$, such that $x<\omega_F$. Then, since $x,\omega_F\in T$, we have $G(x+\delta)-G(\omega_F+\delta)=c\int_x^{\omega_F}G(t)dt$ and so $H(x)=0$, because $G(\omega_F+\delta)=0$. 
	
	In the case $\omega_F=\infty$ we first prove that there exists a sequence $(y_n)$ in $T$ such that $y_n\uparrow\infty$.  If there is a sequence of atoms growing to $\infty$, then it can be taken as the sequence $(y_n)$. 
		On the other hand, if no such a sequence of atoms exists, then there exists $M>0$ such that $y+\delta$ is not an atom, for all $y>M$. Hence, we can take any sequence $(y_n)$ in $\suppF\cap(M,\infty)$, diverging to $\infty$, which is necessarily in $T$. Having found the sequence $(y_n)$ in $T$, note that
		\begin{equation}
		\label{eq:Gyn}
		G(x+\delta)-G(y_n+\delta)=c\int_x^{y_n}G(t)dt,
		\end{equation}
		for all $x\in T$ and $n\ge1$.  Taking limits in \eqref{eq:Gyn}, as $n\to\infty$, we get $H(x)=0$, since $G(y_n+\delta)\to0$ and $\int_x^{y_n}G(t)dt\to\int_x^{\infty}G(t)dt$, and this proves (c).
	
	(iii) Let $\delta<0$, $\omega_F=\infty$ and assume that (d) holds. Let $(y_n)$ be as in the proof of (ii), for the case $\omega_F=\infty$. Then, taking limits in \eqref{eq:Gyn}, we get  (c).
\end{proof}
\begin{remark}\label{rem:3}
 (i) It follows from Lemma \ref{L1} (i) that if $F_1, F_2\in\Pb$ have equal support, then $\lambda F_1+(1-\lambda)F_2\in\Pb$, for all  $\lambda\in[0,1]$. 
 
 (ii) Lemma \ref{L1} also implies that the ``tail-distribution'' derived from any $F\in\Pb$, is also a solution to $\Pb$. More precisely, let $x_0\in(\alpha_{F}, \omega_F)$ and define $F_0(x)=1-G(x)/G(x_0)$, if $x\ge x_0$, and $F_0(x)=0$, otherwise. Then $F_0\in\Pb$ and $\supp(F_0)=\suppF\cap[x_0,\infty)$. 
 
 (iii) Note that (c) is equivalent to the existence of a Borel set $A$, with $F(A)=1$, such that $H(x)=0$ for all $x\in A$; see Definition 1 in \cite{DG}.
\end{remark}

We show below that (c) and (d) of Lemma \ref{L1} are not equivalent. 
\begin{example}
 Let $\delta=-1, c=2$ and $F$ such that $\suppF=T=\{0,1,2\}$, with $G(0)=1/3, G(1)=1/9, G(2)=0$. It is easy to see that (d) holds but (c) does not: take first $x=0, y=1$, then $G(x+\delta)-G(y+\delta)=G(-1)-G(0)=2/3$ and $c\int_{x}^{y}G(t)dt=2\int_{0}^{1}\frac{dt}{3}=\frac{2}{3}$.
Similar calculations for $x=1, y=2$ and $x=0, y=2$ yield equalities showing that (d) holds. 
But $H(2)=G(1)-2\int_{2}^{\infty}G(t)dt=1/9$, so (c) is false.
\end{example}
Condition (c) of Lemma \ref{L1} is expressed in terms of $T\subseteq\suppF$. In contrast, for the case $\delta=0$ considered in \cite{GLS07a}, the condition was formulated directly in terms of $\suppF$.  We now present an example demonstrating that (c) is not equivalent to $H(x)=0$, for all $x\in\suppF$.

\begin{example}
	Take $\delta=1$, $c=1/5$, and  $G$ on $[0,2]$, as follows: 
\begin{equation*}
	G(x)=\begin{cases}1-\frac{x}{10},&\text{ for }x\in[0,1/2),\\
		\frac{19}{20},&\text{ for }x\in[1/2,1),\\
		\frac{19}{20}-\frac{1}{5}\left(x-1-\frac{1}{20}(x-1)^2\right),&\text{ for }x\in[1,3/2),\\
		\frac{3}{4}-\frac{1}{10}\left(x-\frac{3}{2}\right),&\text{ for }x\in[3/2,2].
	\end{cases}
\end{equation*}
Using the results in Section \ref{sec:continuous} below, it is straightforward to check that $G$ can be extended, by the method of steps, to the interval $[2,\infty)$, taking the restriction of $G$ to $[1,2]$ as initial function; this means that $H(x)=0$ holds for every $x\in[1,\infty)$.
The distribution $F=1-G$ has $\suppF=[0,1/2]\cup[1,\infty)$, while $T=[0,1/2)\cup[1,\infty)$, that is $R=\{1/2\}$. By the construction of $G$ we have $H(x)=0$, for all $x\in[0,1/2)$, implying  $F\in\Pb$. However, $0,1/2\in\suppF$,  but $G(1)-G(3/2)=\frac{1}{5}\ne\frac{1}{5}\int_0^{1/2}\left(1-\frac{x}{10}\right)dx=\frac{39}{400}$.
\end{example}

\begin{remark}
Another useful observation concerning (c) in Lemma~\ref{L1} is that the equation $H(x)=0$ can be reformulated in terms of the mean residual life (MRL) function $m(x)$,  a well-established concept in applied probability. Recall that $m(x)=E(X-x|X>x)=\int_{x}^{\infty}G(t)dt/G(x)$; hence, $H(x)=0$ is equivalent to $G(x+\delta)=c\,m(x)G(x).$ Moreover, thanks to the inversion formula $G(x)=\mu\exp\left(-\int_{0}^{x}\frac{dt}{m(t)}\right)/m(x)$, which allows to express the survival function in terms of $m$ (see \cite{Mei}), we find (omitting details) that the MRL function of $F\in\Pb$ satisfies the functional equation  
\begin{equation*}
	c\,m(x+\delta)=\exp\left(-\int_{x}^{x+\delta}\frac{dt}{m(t)}\right),\,\text{ for all }x\in T.
\end{equation*}
Under regularity conditions, the equation above can be recast as $m'(x+\delta)=m(x+\delta)/m(x)-1$, which is a delay differential equation.
\end{remark}

\begin{remark} By a direct rescaling argument, one of the parameters $c$ and $\delta$ of $\Pb$ may be fixed without loss of generality. Indeed, a simple change of variable in Lemma \ref{L1}$(c)$ shows that $F\in\Pb$ if and only if $F_a\in{\mathbf P}_{c/a,\,a\delta}$, for every $a>0$, where $F_a(x)=F(x/a)$, $x\in\mathbb{R}$.
\end{remark}

In the following corollary to Lemma \ref{L1} we prove that all  $F\in\Pb$ have finite left endpoint $\alpha_{F}$.

\begin{corollary}
	\label{cor:alphaF_finite}
If  $F\in \Pb$ then $\alpha_F>-\infty$.
\end{corollary}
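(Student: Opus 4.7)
The plan is to argue by contradiction. Assume $\alpha_F=-\infty$; we will find a point $x\in T$ so far to the left that the equation $H(x)=0$ from Lemma~\ref{L1}(c) must fail, because its right-hand side blows up while its left-hand side stays bounded by $1$.

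First I would show that $T$ is unbounded below. Suppose $\inf T=K>-\infty$. Then $T\subseteq[K,\infty)$ and, since $F(T)=1$ by Lemma~\ref{L1}, we get $F((-\infty,K))=0$, which forces $\alpha_F\ge K>-\infty$, contradicting the hypothesis. Hence there exists a sequence $(x_n)\subset T$ with $x_n\to-\infty$.

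Next I would exploit integrability of $X_1$ to show $\int_{x_n}^{\infty}G(t)\,dt\to\infty$. The integrability of $X_1$ implies
\begin{equation*}
\int_{-\infty}^{0}F(t)\,dt=\EE[X_1^-]<\infty.
\end{equation*}
For $x<0$,
\begin{equation*}
\int_{x}^{0}G(t)\,dt=\int_{x}^{0}(1-F(t))\,dt=-x-\int_{x}^{0}F(t)\,dt\ge -x-\EE[X_1^-],
\end{equation*}
so $\int_{x}^{0}G(t)\,dt\to\infty$ as $x\to-\infty$, and consequently $\int_{x}^{\infty}G(t)\,dt\to\infty$ as well.

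Finally, since $F\in\Pb$, Lemma~\ref{L1}(c) gives $G(x_n+\delta)=c\int_{x_n}^{\infty}G(t)\,dt$ for every $n$. The left-hand side is at most $1$, while the right-hand side tends to $+\infty$, a contradiction. This yields $\alpha_F>-\infty$. The argument is short and the only slightly delicate point is the reduction from $\alpha_F=-\infty$ to the existence of elements of $T$ tending to $-\infty$; I expect that to be the main obstacle, since $T$ differs from $\suppF$ by a countable set $R$ which a priori might itself be unbounded below. However, the probabilistic identity $F(T)=1$ circumvents the issue cleanly, as shown above.
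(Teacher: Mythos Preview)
Your proof is correct. Both you and the paper argue by contradiction, assuming $\alpha_F=-\infty$ and producing a sequence $(x_n)$ in $T$ tending to $-\infty$, but the contradictions are obtained differently. The paper works with statement~(d) of Lemma~\ref{L1}: it chooses the $x_n$ spaced at least $\epsilon$ apart, writes $G(x_{n+1}+\delta)-G(x_n+\delta)=c\int_{x_{n+1}}^{x_n}G(t)\,dt$, and observes that the left side tends to $0$ (both terms tend to $1$) while the right side is bounded below by $cG(x_n)(x_n-x_{n+1})\to c\epsilon$. You instead use statement~(c) directly: $G(x_n+\delta)=c\int_{x_n}^{\infty}G(t)\,dt$, with left side bounded by $1$ and right side diverging thanks to the elementary estimate $\int_x^0 G(t)\,dt\ge -x-\EE[X_1^-]$. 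Your route is a bit more direct, avoiding the spacing requirement on the sequence and the limiting argument for differences; the paper's route, on the other hand, reuses the machinery already set up for Lemma~\ref{L1}(ii) and does not invoke the formula $\EE[X_1^-]=\int_{-\infty}^0 F(t)\,dt$. Your justification that $T$ is unbounded below via $F(T)=1$ is clean and, as you anticipated, sidesteps the worry that $R$ might itself be unbounded below.
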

\begin{proof}
	Suppose that $\alpha_F=-\infty$ and let $\epsilon>0$.  Reasoning as in the proof of Lemma \ref{L1}(ii) for the case $\omega_F=\infty$, we can find a  sequence $(x_n)$ in $T$, decreasing to $-\infty$. Clearly, this sequence can be taken such that $x_n-x_{n+1}>\epsilon$, for all $n\ge1$.  Thus, by (i) of Lemma \ref{L1}, statement (d) holds for $x_n, x_{n+1}$, that is,  
	\begin{equation}
	\label{eq:Gxn}
	G(x_{n+1}+\delta)-G(x_n+\delta)=c\int_{x_{n+1}}^{x_n}G(t)dt,\,n\ge1.
	\end{equation}
	However, since  $G(x_n), G(x_n+\delta)\to1$, as $n\to\infty$,  taking limits in  \eqref{eq:Gxn} we get the contradiction
	\begin{equation*}
	0=\lim_n c\int_{x_{n+1}}^{x_n}G(t)dt\ge  \limsup_n c\,G(x_{n})(x_n-x_{n+1})\ge c\epsilon.
	\end{equation*}
	Therefore, $\alpha_{F}>-\infty$.
	\end{proof}

In the following, we split the study into two distinct cases: $\delta<0$ and $\delta>0$. We will see that the former is easier to analyse and the complete set of distributions in $\Pb$ can be found explicitly. The case $\delta>0$ is more challenging and some questions remain open.

\section{Negative parameter $\delta$}
\label{sec:delta<0}
Throughout this section we assume $\delta<0$. In this situation we first prove that the right endpoint of distributions in $\Pb$ is  necessarily infinite and then exhibit the general solution.
\begin{proposition}
	\label{prop:omega_F infty}
	Let $F\in \Pb$, then 
	\begin{enumerate}
		\item $(\alpha_F,\alpha_F+|\delta|)\cap\suppF=\emptyset$, 
		\item $\alpha_{F}$ is an atom and 
		\item $(\alpha_F,\infty)\cap\suppF\ne\emptyset$.
	\end{enumerate}
\end{proposition}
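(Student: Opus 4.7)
The strategy is to apply condition (c) of Lemma~\ref{L1} at $\alpha_F$ and at any hypothetical support point just above it, then play the resulting identities against one another.

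The pivotal observation is that whenever $x\in\suppF$ satisfies $x+\delta<\alpha_F$, one automatically has $x\in T$, because $x+\delta$ lies strictly below the support and therefore cannot be an atom, and moreover $G(x+\delta)=1$. In particular $\alpha_F\in T$, and $H(\alpha_F)=0$ reduces to the baseline identity
\begin{equation*}
c\int_{\alpha_F}^{\infty}G(t)\,dt=1.
\end{equation*}

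For (a), suppose for contradiction that some $y\in(\alpha_F,\alpha_F+|\delta|)\cap\suppF$ exists. Since $y+\delta<\alpha_F$, the observation gives $y\in T$ and $G(y+\delta)=1$, so $H(y)=0$ becomes $c\int_{y}^{\infty}G(t)\,dt=1$. Subtracting this from the baseline identity yields $\int_{\alpha_F}^{y}G(t)\,dt=0$, and the monotonicity of $G$ forces $G(y)=0$; hence $\omega_F\le y<\alpha_F+|\delta|$. But then $\omega_F+\delta<\alpha_F$ is not an atom either, so $\omega_F\in T$ and $H(\omega_F)=0$ must also hold; however $G(\omega_F+\delta)=1$ while $\int_{\omega_F}^{\infty}G=0$, giving $H(\omega_F)=1\neq 0$, the desired contradiction.

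Part (b) then follows immediately: if $\alpha_F$ were not an atom, then $F(\{\alpha_F\})=0$ together with $\alpha_F\in\suppF$ would force every open interval $(\alpha_F,\alpha_F+\epsilon)$ to carry positive $F$-measure, so it would have to intersect $\suppF$ nontrivially; choosing $\epsilon<|\delta|$ contradicts (a). Part (c) is immediate from the non-degeneracy of $F$, since otherwise $\suppF=\{\alpha_F\}$ would make $F$ a Dirac mass. The only technical care required throughout is tracking membership in $T$ at each application of $H=0$; this is handled uniformly by the standing assumption $\delta<0$, which shifts all the relevant evaluation points strictly below $\alpha_F$ and hence away from any atom.
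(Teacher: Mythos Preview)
Your proof is correct and follows essentially the same approach as the paper: both establish that $\alpha_F$ and the hypothetical point $y$ lie in $T$ (because $\delta<0$ shifts $\cdot+\delta$ strictly below the support) and then exploit the functional equation---the paper via (d), you via (c)---to force $\int_{\alpha_F}^{y}G=0$ and reach a contradiction. Your detour through $\omega_F$ in part (a) is unnecessary, since $\int_{\alpha_F}^{y}G=0$ together with right-continuity already gives $G(\alpha_F)=0$ and hence degeneracy; but the argument is valid as written.
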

\begin{proof}
	(a) Recall from Corollary \ref{cor:alphaF_finite} that $\alpha_F>-\infty$ and suppose there exists $x\in(\alpha_F,\alpha_F+|\delta|)\cap\suppF$; then $x\in T$ because $x+\delta\not\in\suppF$, since $\delta<0$. Also,  $\alpha_F\in T$ because the support is a closed set and $\alpha_{F}+\delta\not\in\suppF$. Then $G(x+\delta)-G(\alpha_F+\delta)=0$ but 	$\int_{x}^{\alpha_{F}}G(t)dt\ne0$. So, (d) of Lemma \ref{L1} fails and,  by (i) of the same lemma, we have a contradiction. 
	
	(b) The assertion follows at once from (a). 
	
	(c) Observe that if $(\alpha_F,\infty)\cap\suppF=\emptyset$, then $F$ would be a degenerate distribution and so, not a possible solution to $\Pb$.
\end{proof}
The general solution to $\Pb$, in the context of negative $\delta$, is presented in the following result.
\begin{theorem}
	\label{thm:delta<0}
$F\in\Pb$ if and only if the following two conditions hold: 
	\begin{enumerate}
		\item $\suppF=\{a_n:n\ge 0\}$, where $(a_n)$ is a strictly increasing sequence, with $a_0=\alpha_F$ and $a_{n+1}- a_n\ge|\delta|$, for all  $n\ge0$. In particular, $\omega_{F}=\infty$.
		\item 
		\begin{equation}\label{valoresdeltaneg}
		G(a_n)=\prod_{i=0}^{n}\frac{1}{(1+c(a_{i+1}-a_i))},\, n\ge0.
		\end{equation}
	\end{enumerate}

\end{theorem}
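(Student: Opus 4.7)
The plan is to proceed by induction to build the atoms $a_n$ one at a time, invoking Proposition~\ref{prop:omega_F infty} on successive tail distributions of Remark~\ref{rem:3}(ii), and then read off the product formula in (b) from Lemma~\ref{L1}(d) applied to consecutive $a_n$'s.

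For (a), set $a_0=\alpha_F$ (finite by Corollary~\ref{cor:alphaF_finite}). Proposition~\ref{prop:omega_F infty} applied to $F$ itself yields that $a_0$ is an atom, that $(a_0,a_0+|\delta|)\cap\suppF=\emptyset$, and that support extends strictly above $a_0$. Assume inductively that atoms $a_0<\cdots<a_n$ of $F$ have been produced with $a_{i+1}-a_i\ge|\delta|$ and $(a_i,a_{i+1})\cap\suppF=\emptyset$ for $i<n$. Pick $x_0\in(a_{n-1},a_n)$ and form the tail $\tilde F\in\Pb$ of Remark~\ref{rem:3}(ii); since $G$ is constant on $[a_{n-1},a_n)$ and jumps at the atom $a_n$, $a_n$ is the left endpoint of $\tilde F$ and an atom of it. Applying Proposition~\ref{prop:omega_F infty} to $\tilde F$ gives $(a_n,a_n+|\delta|)\cap\suppF=\emptyset$ together with support beyond $a_n$, so $a_{n+1}:=\inf\{x\in\suppF:x>a_n\}$ is well defined, attained in $\suppF$ by closedness, satisfies $a_{n+1}-a_n\ge|\delta|$, and leaves $(a_n,a_{n+1})$ empty of support by the definition of infimum.

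The delicate step is to show that the newly defined $a_{n+1}$ is itself an atom, so that the induction can continue. If $a_{n+1}$ is isolated in $\suppF$ from above (in particular if $a_{n+1}=\omega_F$), then isolated points of $\suppF$ must carry positive mass and we are done. Otherwise $\suppF$ accumulates at $a_{n+1}$ from the right, and taking the tail $\tilde F$ with $x_0=a_{n+1}$ one finds $a_{n+1}\in\supp(\tilde F)$ (the accumulation forces $\tilde F(a_{n+1}+\eta)>0$ for every $\eta>0$) while $\tilde F(\{a_{n+1}\})=1-G(a_{n+1})/G(a_{n+1})=0$, contradicting Proposition~\ref{prop:omega_F infty}(b) applied to $\tilde F$. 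This closes the induction and produces an infinite sequence with $a_n\ge a_0+n|\delta|\to\infty$, so $\omega_F=\infty$; any $x\in\suppF$ then lies in some $[a_N,a_{N+1})$ whose open part is empty, forcing $x=a_N$ and exhausting $\suppF=\{a_n:n\ge0\}$.

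Part (b) is then a short computation. Apply Lemma~\ref{L1}(d) with $x=a_{n-1}$, $y=a_n$: since $G$ equals $G(a_{n-1})$ throughout $[a_{n-1},a_n)$, the integral on the right evaluates to $c(a_n-a_{n-1})G(a_{n-1})$. For $n\ge 2$ the shifts $a_{n-1}+\delta$ and $a_n+\delta$ fall in the constant blocks $[a_{n-2},a_{n-1})$ and $[a_{n-1},a_n)$ respectively, giving $G(a_{n-1}+\delta)=G(a_{n-2})$ and $G(a_n+\delta)=G(a_{n-1})$; for $n=1$ one has $G(a_0+\delta)=1$ because $a_0+\delta<\alpha_F$. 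The resulting recursion $G(a_{n-1})\bigl(1+c(a_n-a_{n-1})\bigr)=G(a_{n-2})$, initialised by $G(a_0)=1/(1+c(a_1-a_0))\in(0,1)$, telescopes into the product formula~\eqref{valoresdeltaneg}. The main obstacle throughout is the third paragraph: excluding a continuous accumulation of $\suppF$ at some $a_n$, which is handled cleanly by bootstrapping Proposition~\ref{prop:omega_F infty} through the tail construction of Remark~\ref{rem:3}(ii).
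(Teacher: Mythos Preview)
Your approach is correct but takes a different route from the paper's. The paper runs a direct induction: at each stage it defines $\alpha_F^n=\inf\{t\in\suppF:t>\alpha_F^{n-1}\}$ and verifies by hand, via Lemma~\ref{L1}(d), that this point is an atom, that the next $|\delta|$-interval is free of support, and that support continues --- essentially re-deriving the content of Proposition~\ref{prop:omega_F infty} at every step. You instead bootstrap Proposition~\ref{prop:omega_F infty} through the tail-distribution construction, which is more modular and avoids repeating those arguments; your device for ruling out right-accumulation at $a_{n+1}$ by taking the tail at $x_0=a_{n+1}$ is cleaner than the paper's direct contradiction. One point to tighten: when you invoke Proposition~\ref{prop:omega_F infty}(c) on $\tilde F$ (formed with $x_0\in(a_{n-1},a_n)$) to obtain support beyond $a_n$, you tacitly need $\tilde F$ non-degenerate, which is itself equivalent to support beyond $a_n$; a one-line check (if $\omega_F=a_n$ then $H(a_n)=G(a_n+\delta)=G(a_{n-1})>0$, contradicting Lemma~\ref{L1}(c)) closes this small circularity. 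Part~(b) matches the paper's computation; your extra observation $G(a_0)=1/(1+c(a_1-a_0))$ is correct and in fact sharper than the Remark following the theorem, which treats $G(a_0)$ as a free parameter.
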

\begin{proof} We first prove the necessity of $(a)$ and $(b)$ for $F\in\Pb$.

$(a)$ Let $\alpha_F^0=\alpha_F$, which is an atom by (b) of Proposition \ref{prop:omega_F infty}, and define 
\begin{equation*}
\alpha_F^n=\inf\{t\in\suppF:t>\alpha_F^{n-1}\},\, n\ge1.
\end{equation*}
We use induction to prove that $\alpha_F^n$ exists and satisfies the following properties: (i) $\alpha_F^n-\alpha_F^{n-1}\ge\vert\delta\vert$; (ii) $\alpha_F^n$ is an atom; (iii)  $(\alpha_{F}^n,\alpha_{F}^n+\vert\delta\vert)\cap\suppF=\emptyset$ and (iv) $(\alpha_{F}^n,\infty)\cap\suppF\ne\emptyset$, for all $n\ge1$. 

For the case $n=1$ note that $\alpha_{F}^1$ is well defined and satisfies (i), thanks to (a) and (c) of Proposition \ref{prop:omega_F infty}. For (ii) observe that $\alpha_{F}^1\in \suppF$, because $\suppF$ is a closed set. If $\alpha_{F}^1$ is not an atom, then there exists $x,y\in\suppF$, such that $\alpha_{F}^1<x<y<\alpha_{F}^1+\epsilon$, for any $\epsilon$ such that $0<\epsilon<\vert\delta\vert$. Then $x+\delta,y+\delta\in(\alpha_{F}^0,\alpha_{F}^1)$, so $x+\delta,y+\delta\not\in\suppF$ and hence $x,y\in T$. Furthermore, $G(x+\delta)=G(y+\delta)>0$ and so, applying (d) of Lemma \ref{L1}, we get the contradiction $0=G(x+\delta)-G(y+\delta)=c\int_{x}^{y}G(t)dt>0$; therefore, (ii) holds.
 
To prove (iii), assume for contradiction that there exists $y\in(\alpha_{F}^1,\alpha_{F}^1+\vert\delta\vert)\cap\suppF$, then $y\in T$, since $y+\delta\not\in\suppF$. Moreover, $\alpha_{F}^1+\delta, y+\delta\in[\alpha_{F}^0,\alpha_{F}^1)$, which implies $G(\alpha_{F}^1+\delta)=G(y+\delta)>0$ and so, (d) of Lemma \ref{L1} yields the contradiction $0=G(\alpha_{F}^1+\delta)-G(y+\delta)=c\int_{\alpha_{F}^1}^{y}G(t)dt>0$.
Finally, for (iv) suppose that $(\alpha_{F}^1,\infty)\cap\suppF=\emptyset$. Then  $\omega_F=\alpha_F^1<\infty$ is an atom, necessarily in $T$. So, by Lemma \ref{L1} (i), $H(\omega_{F})=0$ but then, since $\delta<0$, there exists $\epsilon>0$ such that
\begin{equation*}
H(\omega_F)=G(\omega_F+\delta)-c\int_{\omega_F}^{\infty}G(t)dt=G(\omega_F+\delta)\ge G(\omega_F-\epsilon)>0.
\end{equation*}
  For general $n$ assume, as induction hypothesis, that $\alpha_{F}^k$ satisfies the properties stated above, for $k\le n$. First, the existence of 
 $\alpha_{F}^{n+1}$ is guaranteed because, by hypothesis, $(\alpha_{F}^n,\infty)\cap\suppF\ne\emptyset$. Next, the properties (i) to (iv) for $\alpha_{F}^{n+1}$, are readily checked from the induction hypothesis, by adapting the arguments for $\alpha_{F}^1$; details are omitted for brevity. 

From the construction shown above, we conclude that the sequence $(\alpha_F^n)$ is increasing and diverges to $\infty$. We also see that $\suppF$ has no points in the set $\bigcup_{n=0}^\infty(\alpha_F^n, \alpha_F^n+|\delta|)$ nor in $\bigcup_{n=0}^\infty(\alpha_F^n+|\delta|, \alpha_F^{n+1})$. Hence $\suppF$ is as claimed, with $a_n=\alpha_F^n$, for all $n\ge0$.

$(b)$ By Lemma \ref{L1} (i),
\begin{equation}
	\label{eq:recG}
	G(a_{n}+\delta)-G(a_{n+1}+\delta)=c\int_{a_{n}}^{a_{n+1}}G(t)dt=c(a_{n+1}-a_n)G(a_n), 
\end{equation}
for all $n\ge0$. But, since $a_{n+1}-a_n\ge|\delta|=-\delta$, we have $G(a_{n+1}+\delta)=G(a_{n})$ and so, for $n\ge1$, from \eqref{eq:recG} we get
$G(a_{n-1})-G(a_{n})=c(a_{n+1}-a_n)G(a_n)$, which yields
\begin{equation*}
	G(a_{n})=\frac{1}{1+c(a_{n+1}-a_n)}G(a_{n-1}), \, n\ge1.
\end{equation*}
Letting $\gamma_n=(1+c(a_{n+1}-a_n))^{-1}$ we obtain $G(a_n)=G(a_0)\prod_{i=1}^{n}\gamma_i$. 
Also,  taking $n=0$ in \eqref{eq:recG} and noting that $G(a_0+\delta)=1$ we get $$G(a_0)=\frac{1}{1+c(a_1-a_0)},$$ which implies \eqref{valoresdeltaneg}.

The sufficiency of $(a)$ and $(b)$ follows by noting that, for every sequence $\{a_n:n\ge0\}$ as in $(a)$, the function $G$  defined in \eqref{valoresdeltaneg} satisfies (d) of Lemma \ref{L1}, and so, by (iii) of that lemma, we have $F\in\Pb$.
\end{proof}

\begin{remark}Theorem \ref{thm:delta<0} completely solves the problem in the case $\delta<0$. Indeed, for any $\delta<0$ and $c>0$, all distributions $F\in\Pb$ are obtained by fixing an arbitrary sequence $(a_n)$, such that $a_{n+1}-a_n\ge\vert\delta\vert$, for all $n\ge0$, which determines  $\suppF$. The values of $G(a_n)$, for $n\ge0$, are given by \eqref{valoresdeltaneg}. In particular, all distributions in $\Pb$ are discrete.
\end{remark}

We end this section with an example involving a lattice distribution.
\begin{example} (Distributions on $\ZZ^+$) Let $F\in\Pb$ have $\suppF=\ZZ^+=\{0,1,2,\ldots\}$, then, by Theorem \ref{thm:delta<0}, $\delta\in[-1,0)$. The sequence $(a_n)$, with $a_n=n$, for $n\ge0$, satisfies the condition in Theorem \ref{thm:delta<0} (a). Hence, from \eqref{valoresdeltaneg} we get $G(n)=(1+c)^{-n-1}$, for $n\ge0$; that is, $F$ is the geometric distribution starting at 0, with parameter $c/(1+c)$.
Note that, when $\delta\in[-1,0)$, $\delta$-records are just weak records, as defined in \cite{Ver}, since $X_n>M_{n-1}+\delta$ if and only if $X_n\ge M_{n-1}$. For characterisation of distributions through weak records, see, for instance \cite{AV,WL}. We point out the similarity with the case $\delta=0$ (ordinary records) studied in \cite{GLS07a}. In that case, there are no distributions in ${\bf P}_{c,0}$ with support on $\ZZ^+$ if $c \ge1$. For $c<1$, the solutions are mixtures of a Dirac mass at 0 and a geometric distribution starting at 1, with parameter $c$.
\end{example}	
\section{Positive parameter $\delta$}
\label{sec:delta>0}
In this section we study problem $\Pb$ under the assumption $\delta>0$. We first analyse the case of distributions with finite right endpoint and present a complete solution. The proofs of results for this case are analogous to those of Proposition \ref{prop:omega_F infty} and Theorem \ref{thm:delta<0}.
\begin{proposition}
\label{prop:delposfinito}
Let  $F\in\Pb$, such that $\omega_{F}<\infty$, then
\begin{enumerate}
	\item $(\omega_{F}-\delta,\omega_{F})\cap\suppF=\emptyset$,
 	\item $\omega_{F}$ is an atom.
\end{enumerate}
\end{proposition}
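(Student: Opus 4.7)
The plan is to mirror the proof of Proposition \ref{prop:omega_F infty}, swapping $\alpha_F$ for $\omega_F$ and using $\delta>0$ instead of $\delta<0$. In both parts, the idea is to identify elements of $T$ near $\omega_F$ whose $\delta$-shifts fall strictly above $\omega_F$, so that Lemma \ref{L1}(d) either forces a contradiction or pins an atom at $\omega_F$.

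For part (a), I will argue by contradiction: suppose there exists $x\in(\omega_F-\delta,\omega_F)\cap\suppF$. Because $\delta>0$, both $x+\delta$ and $\omega_F+\delta$ lie strictly above $\omega_F$, hence outside $\suppF$; in particular, $F(\{x+\delta\})=F(\{\omega_F+\delta\})=0$. This places $x$ and $\omega_F$ both in $T$ (with $\omega_F\in\suppF$ because $\suppF$ is closed). Applying Lemma \ref{L1}(d) to the pair $(x,\omega_F)$ then yields
\begin{equation*}
G(x+\delta)-G(\omega_F+\delta)=c\int_x^{\omega_F}G(t)\,dt.
\end{equation*}
The left-hand side equals $0$, since $G$ vanishes beyond $\omega_F$; but the right-hand side is strictly positive because $G(t)>0$ for every $t<\omega_F$. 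This contradiction proves (a).

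For part (b), I expect the conclusion to fall out immediately from (a). Since $\omega_F\in\suppF$, every neighbourhood of $\omega_F$ has positive $F$-mass. Choosing any $\epsilon\in(0,\delta)$, the interval $(\omega_F-\epsilon,\omega_F)$ is disjoint from $\suppF$ by (a), while $(\omega_F,\omega_F+\epsilon)$ is disjoint from $\suppF$ by the definition of $\omega_F$. Hence the full $F$-mass of $[\omega_F-\epsilon,\omega_F+\epsilon]$ must be concentrated at $\omega_F$, forcing $F(\{\omega_F\})>0$.

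I do not expect any real obstacle here. The only detail that requires care is verifying membership in $T$ rather than in the exceptional set $R$ of Definition \ref{def:ST}, but this is automatic in this setting, since $\delta>0$ pushes the $\delta$-translates of both $x$ and $\omega_F$ past $\omega_F$ and hence outside $\suppF$, so neither point admits an atomic $\delta$-translate.
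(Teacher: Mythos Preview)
Your proof is correct and follows essentially the same approach as the paper: both show $x,\omega_F\in T$ by noting their $\delta$-translates exceed $\omega_F$, then derive a contradiction from Lemma~\ref{L1}(d), with (b) following immediately from (a). Your write-up is slightly more explicit in spelling out why the mass near $\omega_F$ must concentrate at the point itself, but the argument is the same.
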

\begin{proof}
	(a) Note first that $\omega_{F}\in T$ because, as the support is closed,  $\omega_{F}\in \suppF$ and $\omega_F+\delta\not\in\suppF$. Also, if $x\in(\omega_{F}-\delta,\omega_{F})\cap\suppF$, then $x+\delta\not\in\suppF$, so $x\in T$. We have $G(x+\delta)-G(\omega_{F}+\delta)=0$ but $\int_{x}^{\omega_F}G(t)dt>0$, so (d) of Lemma \ref{L1} fails and we get a contradiction. Therefore, assertion (a) holds. (b) This result is direct from (a).
\end{proof}
\begin{theorem}
	\label{thm:delposfinito}
Let $F$ be such that $\omega_F<\infty$. Then $F\in\Pb$ if and only if the following two conditions hold:
\begin{enumerate}\item $\suppF=\{a_0,a_1,\ldots,a_m\}$ where $m\ge1, a_0=\alpha_F$, $\delta<a_{n+1}-a_{n}<1/c$, for $n=0,1,\ldots,m-2$; $\delta<a_{m}-a_{m-1}=1/c$ and $a_m=\omega_F$; 
\item $G(a_0)\in(0,1)$ and	\begin{equation*}
	G(a_n)=G(a_0)\prod_{i=0}^{n-1}(1-c(a_{i+1}-a_i)),\, \text{for } n=1,\ldots,m-1.
	\end{equation*} 
\end{enumerate}
\end{theorem}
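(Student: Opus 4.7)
The plan is to run a downward induction from $\omega_F$, mirroring the upward induction used in the proof of Theorem \ref{thm:delta<0}. Proposition \ref{prop:delposfinito} already provides the base step: $a_m := \omega_F$ is an atom and $\suppF \cap (\omega_F - \delta, \omega_F) = \emptyset$. I then define recursively $a_{k-1} := \sup\{t \in \suppF : t < a_k\}$ as long as this set is non-empty; since $\suppF$ is closed and, by Corollary \ref{cor:alphaF_finite}, bounded below, each such supremum is attained in $\suppF$. The non-degeneracy of $F$ guarantees at least one iteration, so $m \ge 1$.

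The first substantive step is to prove by induction (running from $k = m$ downward) that each $a_k$ is an atom, that $a_{k+1} - a_k > \delta$, and that $\suppF \cap (a_k, a_{k+1}) = \emptyset$ (this last assertion is automatic from the definition of $a_k$). Both potential failure modes -- $a_k$ being a non-atom (hence an accumulation point of support from below) or the gap $a_{k+1} - a_k$ being $\le \delta$ -- are ruled out by the strategy used in Proposition \ref{prop:omega_F infty}. Namely, one selects $x < y$ in $\suppF$ close to $a_k$ (either both strictly below $a_k$ in the non-atom case, or else $x = a_k, y = a_{k+1}$ in the atom case with small gap), arranges that $x + \delta, y + \delta$ both fall in a subinterval where $G$ is constant (either $(a_k, a_{k+1})$ or $(a_{k+1}, a_{k+2})$, exploiting the inductive gap $a_{k+2} - a_{k+1} > \delta$), verifies $x, y \in T$, and then Lemma \ref{L1}(d) gives $0 = G(x+\delta) - G(y+\delta) = c\int_x^y G(t)\,dt > 0$, a contradiction.

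Having established that $a_k \in T$ with $a_{k+1} - a_k > \delta$, I apply Lemma \ref{L1}(d) with $x = a_k$ and $y = a_{k+1}$. Since $\suppF \cap (a_k, a_{k+1}) = \emptyset$, the function $G$ is constant on $[a_k, a_{k+1})$ with value $G(a_k)$, and the gap condition places $a_k + \delta$ inside that interval, so $G(a_k + \delta) = G(a_k)$. Analogously $G(a_{k+1} + \delta) = G(a_{k+1})$ for $k < m-1$, while $G(a_m + \delta) = 0$ at the top. Substituting and using $\int_{a_k}^{a_{k+1}} G(t)\,dt = G(a_k)(a_{k+1} - a_k)$ yields the recurrence
\begin{equation*}
G(a_{k+1}) = G(a_k)\bigl(1 - c(a_{k+1} - a_k)\bigr).
\end{equation*}
At $k = m-1$, $G(a_m) = 0$ together with $G(a_{m-1}) > 0$ forces $a_m - a_{m-1} = 1/c$, which combined with $a_m - a_{m-1} > \delta$ gives $c\delta < 1$. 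For $k < m-1$, positivity of $G(a_{k+1})$ forces $a_{k+1} - a_k < 1/c$. The closed-form expression for $G(a_n)$ then follows by telescoping, and $G(a_0) \in (0,1)$ since $a_0$ is an atom (giving $G(a_0) < 1$) while $a_1 > a_0$ lies in $\suppF$ (giving $G(a_0) > 0$).

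The induction terminates after finitely many steps because every gap exceeds $\delta > 0$ while $\omega_F - \alpha_F < \infty$; the terminal iterate, at which the set of support points strictly below becomes empty, is necessarily $\alpha_F$, so $a_0 = \alpha_F$. The main technical obstacle I anticipate is precisely the case analysis in the second paragraph, since a priori the point $a_k + \delta$ could land on one of the later atoms $a_{k+1}, a_{k+2}, \ldots$, and the argument must treat each configuration (strict inequality, equality, or $a_k$ non-atom) via an appropriate choice of $x, y \in T$, in direct analogy with the delicate parts of the proofs of Proposition \ref{prop:omega_F infty} and Theorem \ref{thm:delta<0}.
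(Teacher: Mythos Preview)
Your proposal is correct and follows essentially the same approach as the paper: a downward induction from $\omega_F$ defining $a_{k-1}=\sup\{t\in\suppF:t<a_k\}$, showing at each step that $a_k$ is an atom with gap $a_{k+1}-a_k>\delta$ via Lemma~\ref{L1}(d), and then reading off the recurrence $G(a_{k+1})=(1-c(a_{k+1}-a_k))G(a_k)$. The only cosmetic difference is that the paper separates the inductive sub-claims into an ordered list (weak gap inequality, then atom, then strict gap, then the empty-interval property $(a_k-\delta,a_k)\cap\suppF=\emptyset$), whereas you bundle them; your anticipated ``technical obstacle'' about $a_k+\delta$ landing on a later atom is exactly the equality case $a_{k+1}-a_k=\delta$, which both arguments dispatch by the same contradiction.
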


\begin{proof}We first prove the necessity of $(a)$ and $(b)$ for $F\in\Pb$.

As in the proof of Theorem \ref{thm:delta<0}, we sequentially define the elements of $\suppF$ but in reverse order, starting from $\omega_F$. Let $\omega_F^0=\omega_F$ and 
$\omega_F^1 = \sup\{t \in \suppF : t < \omega_F^0\}$, which is well defined, because $F$ is non-degenerate. 

The following properties of $\omega_F^1$ are of interest: (i) $\omega_F^1\le \omega_F^0 - \delta$, which follows directly from (a) in Proposition \ref{prop:delposfinito}; (ii) $(\omega_F^1,\omega_{F}^0)\cap\suppF=\emptyset$, a simple consequence of the definition; (iii) $\omega_{F}^1$ is an atom, which we demonstrate by contradiction: If $\omega_{F}^1$ is not an atom, there exist $x,y\in\suppF$ such that $\omega_F^1-\epsilon< x<y<\omega_F^1$, for any small enough $\epsilon>0$. Then $x+\delta, y+\delta\in(\omega_F^1,\omega_F^0)$ and so, by property (ii),  $x+\delta, y+\delta\not\in\suppF$. This implies that $x,y\in T$ and then, noting that $G(x+\delta)=G(y+\delta)$, from (d) in Lemma \ref{L1}, we get the contradiction $0=G(x+\delta)-G(y+\delta)=c\int_{x}^{y}G(t)dt>0$; 
(iv) $\omega_F^1<\omega_F^0 - \delta$. To prove this property (the strict version of (i)) we show that $\omega_F^0-\delta$ is not an atom and hence, thanks to (iii), equality in (i) is impossible. Suppose, on the contrary, that $\omega_F^0-\delta$ is an atom. Then $\omega_F^0-\delta\in T$ and noting also that $\omega_F^0\in T$ by (b) in Proposition \ref{prop:delposfinito}, Lemma \ref{L1} (d) yields the contradiction $0=G(\omega_F^0-\delta+\delta)-G(\omega_F^0+\delta)=c\int_{\omega_F^0-\delta}^{\omega_F^0}G(t)dt>0$; (v) $L_1:=(\omega_F^1-\delta,\omega_F^1)\cap\suppF=\emptyset$. Reasoning by contradiction,  if $x\in L_1$ then $x+\delta\in(\omega_F^1,\omega_{F}^0)$ and, by property (ii), $x+\delta\not\in\suppF$, hence $x\in T$. Also, as $\omega_F^1\in T$ because of (iii), we apply (d) of Lemma \ref{L1} to reach the contradiction $0=G(x+\delta)-G(\omega_F^1+\delta)=c\int_{x}^{\omega_F^1}G(t)dt>0$.
		
Now, if $\alpha_F > \omega_F^1 - \delta$, then the only possibility is $\omega_F^1 = \alpha_F$, by (v) and because $\alpha_F \in \suppF$. So, $m = 1$, $a_0 = \alpha_F$, and $a_1 = \omega_F$. This situation implies the strict inequality $\delta < \omega_F - \alpha_F$, by (iv). Furthermore, invoking Lemma \ref{L1} (c) again, we get $c = (\omega_F - \alpha_F)^{-1}$, which yields $c\delta < 1$. Hence, the theorem is proved in the case just described.

If $\alpha_F \leq \omega_F^1 - \delta$, then $m \geq 2$, and, as above, we define $\omega_F^2 = \sup\{t \in \suppF : t < \omega_F^1\}$, which exists because $\alpha_F \in \suppF$. Moreover, analogous arguments can be used to verify that the properties of interest (i)--(v), stated above, also hold when $\omega_F^0, \omega_F^1$ are replaced by $\omega_F^1, \omega_F^2$, respectively; details are omitted for brevity.
		
By iterating the procedure that defines $\omega_F^1$ and $\omega_F^2$, we obtain a finite decreasing sequence $\omega_F^0 > \omega_F^1 > \cdots > \omega_F^m = \alpha_F$, such that $\omega_F^{n-1} - \omega_F^n > \delta$, for $n=1, \ldots, m$, with $m=\max\{n\ge0: \alpha_F\le\omega_F^n-\delta\}+1$. Then $\suppF = \{a_0, \ldots, a_m\}$, with $a_n = \omega_F^{m-n}$, for $n=0, \ldots, m$. 
		
The construction above shows that $a_{n+1}-a_{n}>\delta,$ for $n=0, \ldots, m-1$, which implies $G(a_n+\delta)=G(a_n),$ for $n=0, \ldots, m$. So, by Lemma \ref{L1} (d) we have
\begin{equation*}
			G(a_n)-G(a_{n+1})=G(a_n+\delta)-G(a_{n+1}+\delta)=c\int_{a_n}^{a_{n+1}}G(t)dt=cG(a_n)(a_{n+1}-a_n), 
\end{equation*}
		which yields
		\begin{equation}
			\label{recposfin}G(a_{n+1})=(1-c(a_{n+1}-a_n))G(a_n),
		\end{equation}
		for $n=0, \ldots, m-1$. Now, as $G(a_n)>0$, it follows from \eqref{recposfin} that $a_{n+1}-a_n<1/c$, for  $n=0, \ldots, m-2$,  as claimed. Moreover, as $G(a_m)=G(\omega_F)=0$, \eqref{recposfin} yields $a_{m}-a_{m-1}=1/c$.

For the sufficiency of $(a)$ and $(b)$ note that, for every set $\{a_0,\ldots,a_m\}$ as in $(a)$, the values $G(a_n)$ as in $(b)$ define a solution in $\Pb$ with $\supp(F)=\{a_0,\ldots,a_m\}$, by  Lemma \ref{L1} (ii).
\end{proof}

The following result is an immediate consequence of Theorem \ref{thm:delposfinito}.
\begin{corollary} There exists $F\in\Pb$ with $\omega_F<\infty$ if and only if $c\delta<1$.\end{corollary}

\begin{remark} Note that, for distributions with support as described in Theorem \ref{thm:delposfinito}, $\delta$-records are equivalent to standard records because $X_n>M_{n-1}+\delta$ if and only if $X_n>M_{n-1}$. Therefore, the solutions found here coincide with those of Example 3.4 in \cite{GLS07a}.
\end{remark}

Having addressed the case where $\omega_F<\infty$, we now turn to the more challenging scenario of $\omega_F=\infty$. We analyse the continuous and discrete (lattice) distributions separately.
\subsection{Continuous distributions}
\label{sec:continuous}
We consider here the existence of continuous distributions $F\in\Pb$ which, because of Theorem \ref{thm:delposfinito}, necessarily have $\omega_F=\infty$. Recall from Corollary \ref{cor:alphaF_finite} that $\alpha_F>-\infty$. So, without loss of generality, we can restrict attention to distributions $F$ with support  in $\RR_+$ and $\alpha_{F}=0$. 
\begin{example}
\label{exponential}
Let $F$ be the exponential distribution with parameter $\theta>0$ (denoted $Exp(\theta)$). That is, $F(x)=0$, for $x<0$ and $F(x)=1-e^{-\theta x}$, for $x\ge0$. Note that  $F$ being continuous implies $T=\suppF=\RR_+$ (recall Definition \ref{def:ST}). So, from Lemma \ref{L1} (i), we have $F\in\Pb$ if and only if
\begin{equation*}
	e^{-\theta(x+\delta)}=c\int_{x}^{\infty}e^{-\theta t}dt=\frac{c}{\theta}e^{-\theta x}, \, x\ge0,
\end{equation*}
which is equivalent to $\theta e^{-\theta \delta}=c$. In other words, the $Exp(\theta)$ distribution solves $\Pb$ if and only if $\theta$ is a solution to the equation $\theta e^{-\theta \delta}=c$. It is easy to check that such a solution exists if and only if $c\delta\le e^{-1}$ and, when $c\delta< e^{-1}$, there are two of them. See Example \ref{gamma} for further discussion and extensions.
\end{example}
Motivated by the preceding example we look for continuous solutions $F$ with $\suppF=\RR_+$. This is still a  restricted class of continuous distributions not including, for instance, those with singular components (in Lebesgue's decomposition) or having ``flat'' segments.

Throughout the remainder of this subsection, we assume that distributions $F$ under consideration are continuous, with $\suppF=\RR_+$. It then follows from Lemma \ref{L1} (i) that $F\in\Pb$ if and only if 
$G(x)=c\int_{x-\delta}^{\infty}G(t)dt$, for all $x\ge\delta$. This implies that $F$ is absolutely continuous on $(\delta,\infty)$ and satisfies the homogeneous delay differential equation (DDE)
$y'(t)+cy(t-\delta)=0, t\ge\delta$. The solution to this DDE is obtained by specifying an initial function $\varphi$ and solving the  initial value problem
\begin{equation}
	\label{eq:DDE}
	y'(t)+cy(t-\delta)=0,\,  t\ge\delta;\quad y(t)=\varphi(t),\,   t\in[0,\delta].
\end{equation} 
In our context, as we look for continuous solutions with $\suppF=\RR_+$, $1-\varphi$ should be the initial segment of a continuous and strictly increasing distribution function on $\RR_+$. So $\varphi$ must be continuous and strictly decreasing on $[0,\delta]$, with $\varphi(0)=1$ and, because $\suppF=\RR_+$, we also need $\varphi(\delta)>0$. Let the set of such functions be denoted by $\Phi$. Observe that, in general, $\varphi$ only needs to be integrable for \eqref{eq:DDE} to have a solution; see Theorem 2.1 in \cite{HL}. Note also that, if $y$ satisfies \eqref{eq:DDE}, then
\begin{equation}
\label{eq:DDE1}
y(s)-y(t)=c\int_{s-\delta}^{t-\delta}y(x)dx, \,\ s,t\ge\delta.
\end{equation}
A solution $y$ to \eqref{eq:DDE} yields a solution to problem $\Pb$ if and only if $1-y$  is a continuous distribution on $\RR_+$. That is, $y$ should be positive, continuous and strictly decreasing on $\RR_+$, with $y(0)=1$ and $\lim_{t\to\infty}y(t)=0$.

The (unique) solution to \eqref{eq:DDE} on $[\delta,\infty)$ is computed as a continuous extension of $\varphi$, by applying the so-called method of steps, which uses \eqref{eq:DDE1}; see \cite{HL}. For example, the first step yields 
\begin{equation}
	\label{eq:step1}
y(t)=\varphi(\delta)-c\int_{0}^{t-\delta}\varphi(s)ds, \,t\in[\delta, 2\delta].
\end{equation}
In general, for $k\ge1$, we have 
\begin{equation}
	\label{eq:stepk}
y(t)=y(k\delta)-c\int_{(k-1)\delta}^{t-\delta}y(s)ds, \,t\in[k\delta, (k+1)\delta].
\end{equation}
Hence, any  $F\in\Pb$ is completely determined by its behaviour on the initial interval $[0, \delta]$. Moreover, $F(t)$ is increasingly smooth as $t$ increases, with derivatives of order $k$ on  $(k\delta,\infty)$ for $k \geq 1$.
We summarise our findings in the next theorem.
\begin{theorem} 
	\label{thm:bijection}
	 There exists a bijection between the set of continuous distributions $F\in\Pb$, with $\suppF=[0,\infty)$, and the set of positive solutions $y$ to \eqref{eq:DDE}, with  initial function $\varphi\in\Phi$. The bijection is given by $F=1-y$. In particular, there exists $F\in\Pb$ with $\supp(F)=[0,\infty)$ if and only if $c\delta\le1/e$.
\end{theorem}
\begin{proof}
From the preceding discussion, it is clear that every $F\in\Pb$, with $\suppF=[0,\infty)$, is a solution to \eqref{eq:DDE}, with initial function $\varphi\in\Phi$, given by $\varphi(t)=1-F(t)$ for $t\in[0,\delta]$. Conversely, any positive solution $y$ to \eqref{eq:DDE}, with initial function $\varphi\in\Phi$, is  continuous, strictly decreasing and, by Lemma \ref{lem:expdecay} (ii), $y(t)\to0$ as $t\to\infty$. Hence, $1-y$ is a continuous distribution function in $\Pb$, with $\suppF=[0,\infty)$. Finally, by Theorem \ref{thm:deltac<1/e}, if $c\delta>1/e$, there is no positive solution to \eqref{eq:DDE}, so $\Pb=\emptyset$.
\end{proof}
The result above fully describes the elements of $\Pb$ in terms of their values on the interval $[0,\delta]$, that is, in terms of the initial function $\varphi$.  
Then, an interesting problem is to find conditions on initial functions $\varphi\in\Phi$ that generate solutions to $\Pb$. We address this in the following sections, providing both necessary and sufficient conditions on  $\varphi$  for the positivity of solutions to \eqref{eq:DDE}. From this point onward, we assume that $c\delta\leq 1/e$ and define the parameter $a=c\delta/2$.

\subsubsection{Necessary condition}
\begin{proposition}
	\label{prop:nec}
	If $y$ is a positive solution to problem \eqref{eq:DDE}, with initial function $\varphi\in\Phi$, then
\begin{equation}
		\label{eq:nec2}
		\varphi(\delta)>\frac{2a(I_1-2a I_2)}{1-2a},
		\end{equation}
	where $I_1=\frac{1}{\delta}\int_{0}^{\delta}\varphi(t)dt$ and $I_2=\frac{1}{\delta^2}\int_{0}^{\delta}\int_{0}^{t}\varphi(s)dsdt$.
\end{proposition}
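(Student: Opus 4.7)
The plan is to extract the inequality by applying the method of steps twice and requiring positivity at $t = 3\delta$. Since the bound involves both a single integral ($I_1$) and a double integral ($I_2$) of $\varphi$, it is natural to compute $y$ at a point deep enough for both to appear in closed form, and $3\delta$ is the first such point.

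First I would invoke \eqref{eq:step1} to write, for $t\in[\delta,2\delta]$,
\begin{equation*}
y(t)=\varphi(\delta)-c\int_{0}^{t-\delta}\varphi(s)\,ds,
\end{equation*}
and in particular
\begin{equation*}
y(2\delta)=\varphi(\delta)-c\int_{0}^{\delta}\varphi(s)\,ds=\varphi(\delta)-2aI_{1},
\end{equation*}
using $c\delta=2a$. Next I would use \eqref{eq:stepk} with $k=2$ to extend $y$ to $[2\delta,3\delta]$:
\begin{equation*}
y(t)=y(2\delta)-c\int_{\delta}^{t-\delta}y(s)\,ds,
\end{equation*}
so that, evaluating at $t=3\delta$ and substituting the expression for $y$ on $[\delta,2\delta]$,
\begin{equation*}
y(3\delta)=y(2\delta)-c\int_{\delta}^{2\delta}\!\left(\varphi(\delta)-c\int_{0}^{s-\delta}\varphi(u)\,du\right)ds.
\end{equation*}
After the change of variable $s\mapsto s-\delta$ in the inner integral, this collapses to
\begin{equation*}
y(3\delta)=y(2\delta)-2a\,\varphi(\delta)+c^{2}\delta^{2}I_{2}=(1-2a)\varphi(\delta)-2a\bigl(I_{1}-2aI_{2}\bigr).
\end{equation*}

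Finally, positivity of $y$ forces $y(3\delta)>0$, and since $c\delta\le 1/e$ we have $1-2a>0$, so dividing yields exactly \eqref{eq:nec2}. The only slightly delicate points are justifying the strict inequality (rather than $\ge$), which follows because $y$ is assumed strictly positive, and ensuring the denominator $1-2a$ is positive, which is guaranteed by the standing assumption $c\delta\le 1/e$. The computation itself is routine integration; the only real choice is where to evaluate, and choosing $t=3\delta$ is dictated by the form of the conclusion.
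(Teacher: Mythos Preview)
Your proof is correct and follows essentially the same route as the paper: both compute $y(3\delta)$ via two applications of the method of steps, obtaining $y(3\delta)=(1-2a)\varphi(\delta)-2a(I_1-2aI_2)$, and then read off \eqref{eq:nec2} from $y(3\delta)>0$. Your additional remarks justifying $1-2a>0$ and the strictness of the inequality are welcome but not essential, since the standing assumption $c\delta\le 1/e$ and the definition of a positive solution already cover them.
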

\begin{proof}
	From \eqref{eq:stepk}, with $k=1,2$,  we have
	\begin{equation*}
	\begin{split}
	y(3\delta)&=y(2\delta)-c\int_{\delta}^{2\delta}\left(\varphi(\delta)-c\int_{0}^{t-\delta}\varphi(s)ds\right)dt\\
	&=y(2\delta)-c\delta\varphi(\delta)+c^2\int_{\delta}^{2\delta}\int_{0}^{t-\delta}\varphi(s)dsdt\\
	&=y(2\delta)-2a\left(\varphi(\delta)-2a I_2\right)\\
	&=(1-2a)\varphi(\delta)-2a(I_1-2a I_2).
	\end{split}
	\end{equation*}
	Thus, the (necessary) positivity of $y(3\delta)$ is equivalent to \eqref{eq:nec2}.
\end{proof}
\begin{remark} We could continue applying the method of steps to obtain sharper bounds on $\varphi(\delta)$, but the expressions become unwieldy.
\end{remark}

\subsubsection{Sufficient conditions}
We consider here the derivation of sufficient conditions on $\varphi$ to yield a positive solution. 
The idea is to construct a sequence $(a_n)$ that serves, in some sense, as a lower bound for $y$ and to find conditions for the positivity of $(a_n)$. 
We begin with  a technical lemma. Other results used in the proof of Proposition \ref{prop:suf}, below, can be found in the Appendix.
\begin{lemma}
	\label{lem:bn1}
	Let $y$ be the solution to \eqref{eq:DDE}, with initial function $\varphi\in\Phi$. Define  $b_k=y((k+1)\delta)$, for $k\ge0$, and let $n\ge1$. If $b_k>0$, for $k=1,\ldots,n$, then $y$ is positive, decreasing and convex on $[k\delta,(k+1)\delta]$, for $k=1,\ldots,n$. Moreover, 
	\begin{equation*}
	b_{k+1}\ge (1-a)b_{k}-ab_{k-1},
	\end{equation*}
	for $k=1,\ldots,n$.
\end{lemma}
\begin{proof}
	For simplicity, let $J_k=[k\delta, (k+1)\delta]$, for $k\ge0$, and define the following statements depending on $n$: $p(n)=``b_k>0$, for $k=1,\ldots,n$'';  $q(n)=``y$ is positive, decreasing and convex on $J_k$, for $k=1,\ldots,n$'' and $r(n)=``b_{k+1}\ge (1-a)b_{k}-ab_{k-1}$, for $k=1,\ldots,n$''. We must then prove that $p(n)\impl q(n)\wedge r(n)$, for all $n\ge1$. To that end it suffices to establish $p(n)\impl q(n)$ and $q(n)\impl r(n)$, for all $n\ge1$.
	
	For the first implication we use induction on $n$. In the initial step   assume $b_1>0$ and observe that \eqref{eq:step1} implies that $y$ decreases and $y'$ increases (so $y$ is convex) on $J_1$, because $\varphi$ is positive and decreases on $J_0$. We have thus proved that $p(1)\impl q(1)$. Now, as  induction hypothesis, suppose $p(n)\impl q(n)$ and assume that $p(n+1)$ holds. As $p(n+1)$ implies $p(n)$, we have $q(n)$ which, by \eqref{eq:stepk} with $k=n+1$,  yields $q(n+1)$, and the induction is complete.
	
	For the second implication suppose $q(n)$ holds. As $y$ is convex on $J_k$, the following bound holds, for $k=1,\ldots,n$:
		\begin{equation*}
		\int_{k\delta}^{(k+1)\delta}y(s)ds\le\frac{\delta}{2}(y(k\delta)+y((k+1)\delta)).
	\end{equation*}
	The inequality above and \eqref{eq:stepk}, with $k$ replaced by $k+1$ and $t=(k+2)\delta$, yield
	\begin{equation*}
b_{k+1}\!=y((k+1)\delta)-c	\int_{k\delta}^{(k+1)\delta}y(s)ds\ge (1-a)y((k+1)\delta)-ay(k\delta)=(1-a)b_k-ab_{k-1},
	\end{equation*}

	for $k=1,\ldots,n$, and the proof is complete.
	\end{proof}
\begin{proposition}
	\label{prop:suf}
	Suppose $a<3-2\sqrt{2}$ and let $y$ be the solution to \eqref{eq:DDE}, with initial function $\varphi\in\Phi$. Then $y$ is positive if 
	\begin{equation}
	\label{eq:suf}
	\varphi(\delta)>\beta:=\frac{2a((1-\lambda_2)I_1-2aI_2)}{1-\lambda_2-2a},
	\end{equation} 
	where $\lambda_2=\tfrac{1}{2}(1-a-\sqrt{D})$, $D=(1-a)^2-4a$ and $I_1, I_2$ are defined in Proposition \ref{prop:nec}.
\end{proposition}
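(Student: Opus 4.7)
The plan is to use Lemma \ref{lem:bn1} to reduce positivity of $y$ on $[0,\infty)$ to positivity of the sequence $b_k=y((k+1)\delta)$, and then to prove $b_k>0$ for all $k\ge 1$ by induction. The key observation is that $\lambda_2$ is the smaller (real, positive) root of the characteristic polynomial $x^2-(1-a)x+a=0$ of the linear recurrence inequality in Lemma \ref{lem:bn1}; the hypothesis $a<3-2\sqrt 2$ is precisely what forces the discriminant $D>0$, making $\lambda_2$ real and positive. The stronger inductive claim I would aim for is $b_{k+1}\ge\lambda_2 b_k$ (and in particular $b_k>0$) for all $k\ge 1$.

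For the base case, the method of steps, as in Proposition \ref{prop:nec}, yields $b_1=\varphi(\delta)-2aI_1$ and $b_2=(1-2a)\varphi(\delta)-2a(I_1-2aI_2)$. A direct algebraic manipulation shows that $\varphi(\delta)>\beta$ is exactly equivalent to $b_2>\lambda_2 b_1$, after clearing the factor $1-\lambda_2-2a>0$ (which holds since $a<3-2\sqrt 2$ implies $\lambda_2<(1-a)/2$ and $a<1/3$). To derive $b_1>0$ as a free consequence, I would use Fubini to rewrite $I_2=\frac{1}{\delta^2}\int_0^\delta(\delta-s)\varphi(s)\,ds$, whence $I_1-I_2=\frac{1}{\delta^2}\int_0^\delta s\,\varphi(s)\,ds\ge0$. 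This inequality, together with $1-\lambda_2-2a>0$, yields $\beta\ge 2aI_1$, so $\varphi(\delta)>\beta$ implies $b_1=\varphi(\delta)-2aI_1>0$, and hence also $b_2>\lambda_2 b_1>0$.

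For the inductive step, assume $b_j>0$ for $j=1,\ldots,k$ and $b_j\ge\lambda_2 b_{j-1}$ for $j=2,\ldots,k$, with $k\ge 2$. Lemma \ref{lem:bn1} then delivers $b_{k+1}\ge(1-a)b_k-a b_{k-1}$; invoking $b_{k-1}\le b_k/\lambda_2$ from the hypothesis produces $b_{k+1}\ge b_k\bigl[(1-a)-a/\lambda_2\bigr]$. Dividing $\lambda_2^2-(1-a)\lambda_2+a=0$ by $\lambda_2>0$ yields exactly $(1-a)-a/\lambda_2=\lambda_2$, so $b_{k+1}\ge\lambda_2 b_k>0$, closing the induction. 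Combining $b_k>0$ for all $k\ge 1$ with $y=\varphi>0$ on $[0,\delta]$ and applying Lemma \ref{lem:bn1} then shows $y>0$ on every $[k\delta,(k+1)\delta]$, hence on $[0,\infty)$.

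The main obstacle is the base case: recognising that the single compact inequality $\varphi(\delta)>\beta$ simultaneously encodes the correct ratio condition $b_2>\lambda_2 b_1$ and forces the auxiliary positivity $b_1>0$ (via the Fubini comparison $I_1\ge I_2$). Once this identification is made, the rest of the argument is purely algebraic, driven entirely by the fact that $\lambda_2$ is the smaller root of the characteristic polynomial of the difference inequality supplied by Lemma \ref{lem:bn1}.
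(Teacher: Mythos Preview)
Your proof is correct and shares the same skeleton as the paper's: reduce positivity of $y$ to positivity of $b_k=y((k+1)\delta)$ via Lemma~\ref{lem:bn1}, identify the hypothesis $\varphi(\delta)>\beta$ with $b_2>\lambda_2 b_1$, deduce $b_1>0$ from $\beta>2aI_1$ (which the paper also notes, using $I_1>I_2$), and then induct. The difference lies in how the induction is carried. You maintain the ratio invariant $b_{k+1}\ge\lambda_2 b_k$ directly, using the identity $(1-a)-a/\lambda_2=\lambda_2$ obtained from the characteristic equation. The paper instead packages this step into two auxiliary results (Lemmas~\ref{lem:rec} and~\ref{lem:bn2}): it compares $(b_k)$ with the exact solution $(a_k)$ of the linear recurrence $a_n=(1-a)a_{n-1}-a a_{n-2}$ with $a_0=b_1$, $a_1=b_2$, proving $b_{k+1}\ge a_k$ and then $a_k>0$ under the same condition $a_1>\lambda_2 a_0$. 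Your route is more self-contained and slightly shorter; the paper's route has the advantage of delivering the explicit lower bound $b_{k+1}\ge a_k$ with $a_k$ given in closed form, which is reused later (Proposition~\ref{prop:aprox}) to produce quantitative bounds on $G(n\delta)$.
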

\begin{proof}
We use the notations and definitions of  Lemma \ref{lem:bn1}. Note first that $\beta$ is well-defined, since  $1-\lambda_2-2a>(1-3a)/2>(1-3(3-2\sqrt{2}))/2>0$, by hypothesis. Moreover, as $I_1>I_2$, we have $\beta>2aI_1>0$. 

By Lemma \ref{lem:bn1}, it suffices to prove that $b_n>0$, for all $n\ge1$. We proceed by induction noting initially, from \eqref{eq:step1}, that  $b_1=\varphi(\delta)-2aI_1>\beta-2aI_1>0$. As the induction hypothesis assume that $p(n)$ holds. Then, from Lemma \ref{lem:bn1}, we get  $r(n)$.

Now we invoke Lemma \ref{lem:bn2} with $x_k=b_{k+1}$ for $k=0,\ldots,n$. Hence, we have $b_{k+1}=x_k \geq a_k>0$ for $k=2, \ldots, n$, where $(a_n)$ is the solution of the recurrence \eqref{eq:rec} in Lemma \ref{lem:rec}, with $a_0 = b_1$ and $a_1 = b_2$. Note that the conditions of Lemma \ref{lem:rec} are satisfied because hypothesis \eqref{eq:suf} is equivalent to $b_2 > \lambda_2 b_1$. From the arguments above, we conclude that $b_{n+1} > 0$, and the induction is complete.
\end{proof}
\begin{corollary}
	\label{cor:sufR} Under the hypotheses of Proposition \ref{prop:suf}, $y$ is positive if  
		\begin{equation}\label{eq:suf2}
		\varphi(\delta)>\frac{2a}{1-\lambda_2}.
		\end{equation}
\end{corollary}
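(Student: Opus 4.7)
The plan is to reduce the simpler condition \eqref{eq:suf2} to the inequality $b_1 > \lambda_2 b_0$ (where, as in Lemma \ref{lem:bn1}, $b_k=y((k+1)\delta)$), and then to rerun the induction of Proposition \ref{prop:suf} with the indexing in the application of Lemma \ref{lem:bn2} shifted by one.

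First, I would record from \eqref{eq:step1} with $t=2\delta$ that $b_0 = y(\delta) = \varphi(\delta)$ and $b_1 = y(2\delta) = \varphi(\delta) - 2aI_1$. Since $\varphi \in \Phi$ is strictly decreasing on $[0,\delta]$ with $\varphi(0)=1$, we have $\varphi(t) \leq 1$ on $[0,\delta]$, hence $I_1 \leq 1$.

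Second, using $I_1 \leq 1$, condition \eqref{eq:suf2} gives
\[
(1-\lambda_2)\varphi(\delta) > 2a \ge 2aI_1,
\]
which rearranges to $b_1 = \varphi(\delta) - 2aI_1 > \lambda_2 \varphi(\delta) = \lambda_2 b_0$. In particular $b_1 > 0$ (since $0<\lambda_2<1$ under the hypotheses of Proposition \ref{prop:suf}).

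Third, I would reproduce the induction of Proposition \ref{prop:suf}, but invoke Lemma \ref{lem:bn2} with $x_k = b_k$ in place of $x_k = b_{k+1}$. The inductive hypothesis $b_k > 0$ for $k=1,\ldots,n$ yields, via Lemma \ref{lem:bn1}, the inequality $b_{k+1} \geq (1-a)b_k - a b_{k-1}$ for $k = 1,\ldots,n$, i.e., $x_{k+1} \geq (1-a)x_k - a x_{k-1}$. With initial data $x_0 = b_0$, $x_1 = b_1$ satisfying $x_1 > \lambda_2 x_0$ by the second step, Lemmas \ref{lem:bn2} and \ref{lem:rec} (applied with $a_0 = b_0$, $a_1 = b_1$) give $b_{n+1} \geq a_{n+1} > 0$, completing the induction. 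Positivity of $y$ on $\RR_+$ then follows from Lemma \ref{lem:bn1}, as in Theorem \ref{thm:bijection}.

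The main thing to verify carefully is that the index shift in the application of Lemma \ref{lem:bn2} is legitimate: specifically, one now uses the $k=1$ recurrence $b_2 \ge (1-a)b_1 - ab_0$ (which was not required in the proof of Proposition \ref{prop:suf}), but this is exactly the first case guaranteed by Lemma \ref{lem:bn1} once $b_1 > 0$. It is worth remarking that \eqref{eq:suf2} is not, in general, stronger than \eqref{eq:suf} (one checks $\lambda_2(1-\lambda_2) = a(1+\lambda_2) < 2a$, so $2a/(1-\lambda_2) < 2a(1-\lambda_2)/(1-\lambda_2-2a)$), and so the corollary does not follow by merely bounding the right-hand side of \eqref{eq:suf}; it requires the shifted induction described above.
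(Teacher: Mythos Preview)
Your proof is correct and takes a genuinely different route from the paper. The paper proves the corollary by showing directly that $\beta \le \frac{2a}{1-\lambda_2}$ for every $\varphi \in \Phi$, so that \eqref{eq:suf2} implies \eqref{eq:suf} and Proposition \ref{prop:suf} applies. The key ingredient there is a concavity argument: writing $g(s) = \frac{1}{\delta}\int_0^s \varphi(u)\,du$, one has $g$ concave with $g(0)=0$, hence $g(t) \ge t\,g(\delta)/\delta$ on $[0,\delta]$, which gives $I_2 \ge I_1/2$; combined with $I_1 \le 1$ this yields $\beta \le \frac{2a(1-\lambda_2-a)}{1-\lambda_2-2a} = \frac{2a}{1-\lambda_2}$, the last equality following from $\lambda_2^2 = (1-a)\lambda_2 - a$. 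Your approach instead bypasses $\beta$ entirely by shifting the induction down one index: using only $I_1 \le 1$ you obtain $b_1 > \lambda_2 b_0$, which is precisely the seed needed for Lemmas \ref{lem:rec} and \ref{lem:bn2} with $a_0 = b_0$, $a_1 = b_1$. This is arguably cleaner, since it avoids the concavity estimate altogether and uses only the crude bound $I_1\le 1$.

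However, your closing remark is incorrect: \eqref{eq:suf2} \emph{is} stronger than \eqref{eq:suf}, as the paper's argument demonstrates. Your inequality $\frac{2a}{1-\lambda_2} < \frac{2a(1-\lambda_2)}{1-\lambda_2-2a}$ compares $\frac{2a}{1-\lambda_2}$ to the value $\beta$ would take if one could set $I_1 = 1$ and $I_2 = 0$ independently---but $I_1$ and $I_2$ are coupled (indeed $I_2 \ge I_1/2$ by the concavity of $g$), and it is exactly this coupling that makes the paper's bound go through. So while your shifted induction is a valid self-contained argument, the claim that the corollary ``does not follow by merely bounding the right-hand side of \eqref{eq:suf}'' is false.
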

\begin{proof}
It suffices to prove that $\beta\le \frac{2a}{1-\lambda_2}$, for every $\varphi\in\Phi$, where $\beta$ is defined in \eqref{eq:suf}. To that end, we define the function $g(s)=\frac{1}{\delta}\int_0^s\varphi(u)du$, for $s\in[0,\delta]$, which is continuous, increasing and concave, with $g(0)=0$ and $g(\delta)=I_1$. Maximising $\beta$ as a function of $\varphi$ is equivalent to maximizing $\gamma g(\delta)-\frac{1}{\delta}\int_0^\delta g(s)ds$ as a function of $g$, where $\gamma=(1-\lambda_2)/(2a)>0$. As $g$ is increasing and concave, we have $g(t)\ge tg(\delta)/\delta$ on $[0,\delta]$ and so, $\int_0^\delta g(s)ds\ge  \delta g(\delta)/2$. Hence, 
\begin{equation*}
\beta=\frac{4a^2\left(\gamma g(\delta)-\frac{1}{\delta}\int_0^\delta g(s)ds\right)}{1-\lambda_2-2a}
\le\frac{4a^2(\gamma-1/2)g(\delta)}{1-\lambda_2-2a}
\le\frac{2a(1-\lambda_2-a)}{1-\lambda_2-2a}=\frac{2a}{1-\lambda_2},
\end{equation*}
where the last equality above follows from elementary calculations.
\end{proof}

\begin{remark} (i) Observe the difference between the sufficient conditions in Proposition \ref{prop:suf} and Corollary \ref{cor:sufR}. While the bound of $\varphi(\delta)$ in \eqref{eq:suf} depends on $\varphi$, the one in \eqref{eq:suf2} does not.

(ii) Proposition \ref{prop:nec} provides the necessary condition  \eqref{eq:nec2} on the initial function $\varphi$ for \eqref{eq:DDE} to have a positive solution. On the other hand, Proposition \ref{prop:suf} gives the sufficient condition \eqref{eq:suf} but imposes the constraint $a<3-2\sqrt{2}\sim 0.1716$. Notably, this bound is quite close to the condition $a\le1/2e\sim 0.1839$, which is necessary for the existence of a positive solution.
\end{remark}

\subsubsection{Comparison of necessary and sufficient bounds}
We are interested in whether the necessary and sufficient conditions on $\varphi$ differ significantly. Conditions \eqref{eq:nec2} and \eqref{eq:suf} for $\varphi(\delta)$ can be equivalently written, respectively as 
\begin{equation*}
\begin{split}
\varphi(\delta)/I_1>N&:=2a(1-2a I_2/I_1)/(1-2a),\\
\varphi(\delta)/I_1>S&:=2a((1-\lambda_2)-2a I_2/I_1)/(1-2a-\lambda_2).
\end{split}
\end{equation*}
 Note that $N$ and $S$ depend on $I_1$ and $I_2$ through their ratio $r:=I_2/I_1$, which satisfies $r\in[1/2,1]$. In Figure \ref{fig:NS} we exhibit plots of $N$ and $S$, as functions of $a$, for $\delta=1$ and $r=0.55, 0.75, 0.95$, to assess how close the conditions are. The graphs illustrate that the conditions are quite similar.

\begin{figure}[h]

	\centering
	\includegraphics[width=12.1cm]{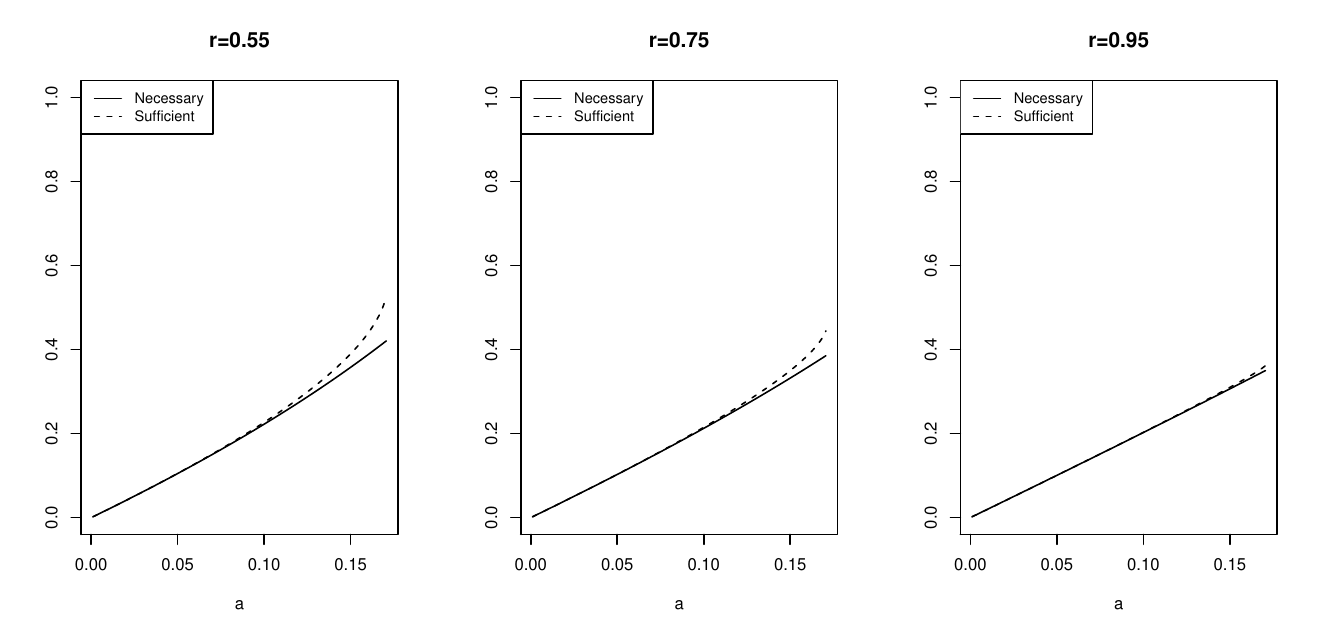}\vspace{-5pt}
	\caption{	\label{fig:NS}\footnotesize In each panel, the region above the dashed curve corresponds to pairs $(a,\varphi(\delta)/I_1)$, such that $\varphi$ yields a positive solution to \eqref{eq:DDE}.  Points below the solid curve corresponds to pairs such that $\varphi$ does not yield a positive solution.}
\end{figure}
\subsubsection{Comparison of solutions}
We have seen above that the necessary and the sufficient conditions on the initial function are close. However, certain cases remain unresolved, where for a given $\varphi\in\Phi$, it is unclear whether it generates a solution to $\Pb$. One such case occurs when $a\in[3-2\sqrt{2},1/(2e)]$, where necessary conditions are known, but no sufficient condition has been established. Nevertheless, solutions do exist in this range, including specific exponential distributions (see Examples \ref{exponential} and \ref{gamma}). Corollary \ref{cor:compare} below establishes sufficient conditions by comparing the given solution with another. This result is based on a general comparison theorem for DDEs presented in the Appendix.
\begin{corollary}\label{cor:compare}
	Let $F\in\Pb$. If $y$ is the solution to \eqref{eq:DDE}, with initial function $\varphi\in\Phi$, such that $\varphi(t)\le G(t)$ for $t\in[0,\delta]$ and $\varphi(\delta)=G(\delta)$, then $y(t)\ge G(t)$, for all $t\ge\delta$ and $1-y\in\Pb$.
\end{corollary}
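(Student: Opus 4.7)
The plan is to reduce the claim to the DDE framework of Theorem \ref{thm:bijection} and invoke the general comparison theorem for DDEs stated in the Appendix, using $G$ itself as a known positive reference solution.

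First I would observe that, since $F\in\Pb$ is continuous with $\suppF=\RR_+$, the forward direction of Theorem \ref{thm:bijection} identifies $G=1-F$ as a positive solution of \eqref{eq:DDE} whose initial function $\psi:=G|_{[0,\delta]}$ lies in $\Phi$. By hypothesis, $y$ is the solution of \eqref{eq:DDE} with initial function $\varphi\in\Phi$ satisfying $\varphi\le\psi$ on $[0,\delta]$ and $\varphi(\delta)=\psi(\delta)$. These are exactly the hypotheses of the appendix comparison theorem applied to the pair of initial functions $(\varphi,\psi)$, and invoking it would deliver the first assertion $y(t)\ge G(t)$ for all $t\ge\delta$.

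Next, to derive $1-y\in\Pb$ I would appeal to the bijection in Theorem \ref{thm:bijection}: it suffices to verify that $y$ is a positive solution of \eqref{eq:DDE} with initial function in $\Phi$. Positivity on $[0,\delta]$ is immediate from $\varphi\in\Phi$, while positivity on $[\delta,\infty)$ follows from the established inequality $y\ge G$, combined with the fact that $G>0$ on $[0,\infty)$ because $\omega_F=\infty$ (a consequence of continuity together with $\suppF=\RR_+$, via Theorem \ref{thm:delposfinito}). Hence $1-y$ defines a continuous distribution with support $\RR_+$, and Theorem \ref{thm:bijection} places it in $\Pb$.

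The main obstacle is the comparison step itself. The DDE $y'(t)+cy(t-\delta)=0$ is such that a larger initial value at the lagged time forces a more negative derivative, so a direct method-of-steps argument would flip the inequality between consecutive intervals $[k\delta,(k+1)\delta]$: from $\varphi\le\psi$ on $[0,\delta]$ one gets $y\ge G$ on $[\delta,2\delta]$, but naive iteration would then lose control on $[2\delta,3\delta]$. The appendix theorem circumvents this alternation by pivoting on the known positive solution $G$, so once the hypotheses $\varphi\le\psi$ and $\varphi(\delta)=\psi(\delta)$ are matched to its statement, the bulk of the work is done inside the Appendix and the present corollary reduces to bookkeeping.
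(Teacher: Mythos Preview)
Your proposal is correct and follows exactly the paper's approach: the paper's proof is a single line invoking Theorems \ref{thm:bijection} and \ref{thm:compare}, and you have unpacked precisely how those two results combine (identifying $G$ as the positive reference solution $y_2$ in Theorem \ref{thm:compare}, then feeding the resulting positivity of $y$ back into Theorem \ref{thm:bijection}). Your final paragraph on why a naive step-by-step comparison would alternate is a nice piece of commentary, but it is not needed for the argument itself.
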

\begin{proof}
	The assertion follows directly from Theorems \ref{thm:bijection} and \ref{thm:compare}.
\end{proof}
\begin{example}\label{gamma}
In Example \ref{exponential}, we found that for $a<1/2e$ (recall that $a=\delta c/2$), the $Exp(\theta_1)$ and $Exp(\theta_2)$ distributions are in $\Pb$, where $\theta_1, \theta_2$ are the only real solutions to $\theta e^{-\theta\delta}=c$. Here, we analyse the case $a=1/2e$, where only one real solution to the equation exists, namely $\theta=1/\delta$. Thus, the $Exp(1/\delta)$ distribution belongs to $\Pb$. Interestingly, there are also solutions of the form $G(t)=(\alpha t+1)e^{-t/\delta}$, for some $\alpha>0$. Indeed, it is straightforward to check that $G'(t)=-G(t-\delta)/(\delta e)$, for all $t\ge\delta$, and $G(0)=1$. It only remains to verify that $G$ is decreasing, which holds if and only if $\alpha\delta\le1$. So, the distribution defined by $F(t)=1-(\alpha t+1)e^{-t/\delta}$, $t>0$, is in $\Pb$ for every $\alpha\in[0,1/\delta]$ when $a=1/2e$. Since the survival function of the Gamma distribution with shape parameter $p=2$ and rate parameter $\theta=1/\delta$ (denoted $Gamma(2,1/\delta)$) is $G_1(t)=(t/\delta+1)e^{-t/\delta}$, $t>0$, our solution corresponds to the mixture of the $Gamma(2,1/\delta)$ and $Exp(1/\delta)$ distributions, with respective weights $\alpha\delta$ and $1-\alpha\delta$.
	\end{example}

\begin{remark} Example \ref{gamma}, combined with Corollary \ref{cor:compare}, allows us to identify new solutions to $\Pb$ through comparison. Notably, the case $a=1/2e$ is not covered by Proposition \ref{prop:suf}, which requires $a<3-2\sqrt{2}$.  Specifically, if  $G(t)=e^{-t/\delta}$, for $t\ge0$, Corollary \ref{cor:compare} implies that any initial function $\varphi\in\Phi$, satisfying $\varphi(\delta)=G(\delta)=1/e$ and $\varphi(t)\le e^{-t/\delta}$, for $t\in[0,\delta)$, generates a solution in $\Pb$.
\end{remark}

Another direct application of Theorem \ref{thm:compare} allows for the description of a large family of solutions to $\Pb$. This result is obtained through a direct comparison with the fundamental function of \eqref{eq:DDE}, which is known to be positive. See Definition \ref{def:fundamental} and Theorem \ref{thm:fundamental>0}.

\begin{proposition}
		\label{prop:initial}
	Let $\psi$ be a positive and integrable function on $[0,\delta]$, with $\int_{0}^{\delta}\psi(t)dt=1$, and let
	\begin{equation*}
	\varphi(t)=1-c\int_{0}^{t}\int_{0}^{s}\psi(u)duds, t\in[0,\delta].
	\end{equation*}
	Then  $\varphi\in\Phi$ and the solution $y$ to problem \eqref{eq:DDE}, with initial function $\varphi$, is positive.
\end{proposition}
\begin{proof}
	Let $\varphi_1$ be a function on $[0,\delta]$ such that $\varphi_1=-\psi$ on $[0,\delta)$ and $\varphi_1(\delta)=0$. Let $y_1$ be the solution to \eqref{eq:DDE} with initial function $\varphi_1$. Then, from Theorems \ref{thm:fundamental>0} and \ref{thm:compare} we have $y_1(t)>0$, for all $t\ge\delta$. In particular, from \eqref{eq:step1}, we get
	$y_1(t)=-c\int_{0}^{t-\delta}\varphi_1(s)ds>0$, for $t\in[\delta,2\delta]$.
	Moreover, from \eqref{eq:stepk} with $k=2$, we obtain
	\begin{equation*}
	\begin{split}
	y_1(t)&=y_1(2\delta)-c\int_{\delta}^{t-\delta}y_1(s)ds\\
	&=-c\int_{0}^{\delta}\varphi_1(s)ds+c^2\int_{\delta}^{t-\delta}\int_{0}^{s-\delta}\varphi_1(u)duds\\
	&=c+c^2\int_{0}^{t-2\delta}\int_{0}^{s}\varphi_1(u)duds\\
	&>0, \text{ for } t\in[2\delta,3\delta].
	\end{split}
	\end{equation*}
	 Finally, dividing $y_1(t)$ by  $c$ and shifting the function from $[2\delta,3\delta]$ to $[0,\delta]$, we have
	\begin{equation*}
	\frac{y_1(t+2\delta)}{c}=1-c\int_{0}^{t}\int_{0}^{s}\psi(u)duds, t\in[0,\delta],
	\end{equation*}
	It is easy to see that $\varphi\in\Phi$. Indeed, $\varphi(0)=1$ and $\varphi$ is strictly decreasing, with $\varphi(\delta)= y_1(3\delta)/c>0$. To conclude, note that the positivity of $y$ follows from that of $y_1$. See Remark \ref{rem:3} (ii).
\end{proof}
\subsubsection{Properties of solutions $F\in\Pb$}
In the following proposition we present upper and lower bounds of the solutions to problem $\Pb$.
\begin{proposition}
	\label{prop:aprox}
	Let $F\in\Pb$ and let $\varphi$ be the restriction of $1-F$ to the interval $[0,\delta]$. Then, if the conditions of Proposition \ref{prop:suf} are satisfied, the following bounds hold:
	\begin{equation}\label{cotas}
	a_n\le G((n+2)\delta)\le a_1(1-2a)^{n-1},\, n\ge 2,
	\end{equation}
	where  $(a_n)$ solves  \eqref{eq:rec}, with 
	$a_0=G(2\delta)=\varphi(\delta)-2aI_1>0$ and  $a_1=G(3\delta)=(1-2a)\varphi(\delta)-2a(I_1-2aI_2)$.
\end{proposition}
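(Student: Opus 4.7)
The plan is to combine the machinery already developed for Proposition~\ref{prop:suf} and Lemma~\ref{lem:bn1} with a straightforward monotonicity argument, with almost no new work. Since $F\in\Pb$ is assumed, the function $y=G$ is positive and, by Lemma~\ref{lem:bn1}, it is decreasing and convex on each subinterval $J_k=[k\delta,(k+1)\delta]$ for $k\ge 1$; the hypotheses of Proposition~\ref{prop:suf} additionally guarantee that the auxiliary recurrence $(a_n)$ stays positive and bounds the sequence $b_k=y((k+1)\delta)=G((k+1)\delta)$ from below.

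First, I would verify the stated expressions for $a_0$ and $a_1$. Equation~\eqref{eq:step1} with $t=2\delta$ gives
\[
G(2\delta)=\varphi(\delta)-c\int_0^{\delta}\varphi(s)\,ds=\varphi(\delta)-2aI_1,
\]
and the computation of $y(3\delta)$ carried out inside the proof of Proposition~\ref{prop:nec} yields
$G(3\delta)=(1-2a)\varphi(\delta)-2a(I_1-2aI_2)$. So $a_0=b_1$ and $a_1=b_2$ as required.

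Next, for the lower bound, I would simply reproduce the key step from the proof of Proposition~\ref{prop:suf}: setting $x_k=b_{k+1}$ and using the inequality $b_{k+1}\ge(1-a)b_k-ab_{k-1}$ from Lemma~\ref{lem:bn1}, an application of Lemma~\ref{lem:bn2} together with Lemma~\ref{lem:rec} yields
\[
G((n+2)\delta)=b_{n+1}=x_n\ge a_n,\qquad n\ge 2,
\]
where $(a_n)$ is the solution of recurrence~\eqref{eq:rec} with the initial values $a_0,a_1$ verified above.

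For the upper bound, I would exploit only that $G$ is decreasing on each $J_k$ for $k\ge 1$: this implies $\int_{k\delta}^{(k+1)\delta}G(s)\,ds\ge\delta G((k+1)\delta)$, so substituting into \eqref{eq:stepk} with $t=(k+2)\delta$ gives
\[
G((k+2)\delta)=G((k+1)\delta)-c\int_{k\delta}^{(k+1)\delta}G(s)\,ds\le(1-c\delta)G((k+1)\delta)=(1-2a)G((k+1)\delta).
\]
Iterating this inequality starting from $k=2$ and using $G(3\delta)=a_1$ yields $G((n+2)\delta)\le(1-2a)^{n-1}a_1$ for $n\ge 2$. There is no real obstacle here: the result is essentially a packaging of Lemma~\ref{lem:bn1} (for decay/convexity and the lower recurrence), Lemmas~\ref{lem:rec}--\ref{lem:bn2} (to turn the recurrence inequality into the explicit lower bound $a_n$), and a one-line monotonicity estimate to produce the geometric upper bound.
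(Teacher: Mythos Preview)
Your proposal is correct and follows essentially the same route as the paper: the lower bound is obtained by invoking the proof of Proposition~\ref{prop:suf} (i.e., Lemma~\ref{lem:bn1} together with Lemmas~\ref{lem:rec}--\ref{lem:bn2}), and the upper bound comes from the one-line monotonicity estimate $\int_{k\delta}^{(k+1)\delta}G\ge\delta G((k+1)\delta)$, which is exactly what the paper records as $y((n+2)\delta)\le(1-2a)y((n+1)\delta)$ before iterating.
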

\begin{proof} Let $y$ be the solution to \eqref{eq:DDE}, with initial function $\varphi\in\Phi$, so that, by Theorem \ref{thm:bijection}, $y=G$. The lower bound in \eqref{cotas} is established in the proof of Proposition \ref{prop:suf}. The upper bound comes from the following simple observation:
	\begin{equation*}
		y((n+2)\delta)=y((n+1)\delta)-c\int_{n\delta}^{(n+1)\delta}y(t)dt\le(1-2a)y((n+1)\delta), \, n\ge0.
	\end{equation*}
	Indeed, iterating the recurrence above yields $y((n+2)\delta)\le(1-2a)^{n-1}y(3\delta)$.
\end{proof}
\begin{remark} The bounds \eqref{cotas} allow us to approximate the solution $y$ at points $n\delta$; approximations for intermediate values are direct since $y$ is decreasing. Note that the bounds can be readily computed since we have the explicit form of the terms in the recurrence $(a_n)$, given in \eqref{eq:Hal}. While the exact value of the solution $y$ can be found by the method of steps, the bounds are computed much faster. Moreover, they allow the study of  properties of $y$, which is not possible by using the method of steps, as no analytical expression is obtained.
\end{remark}

It would be interesting to check whether the bounds above are tight. Although we have explicit expressions for the bounds, an analytical comparison seems difficult. For this reason, we present an illustrative example, with $\delta=1$, $c=0.2$, $\varphi(x)=1-x/2$. Figure \ref{figura2} shows the solution computed by the method of steps for $x\in[0,20]$, along with the lower and upper bounds for $y(n)$, $n\ge 4$, from \eqref{cotas}. The values of the bounds for $y(0),y(1),y(2)$ and $y(3)$ are defined as the actual values, which can be written in terms of $\varphi(\delta)$, $I_1$, and $I_2$. The plot shows that the bounds are very close to each other and to the solution; see Table \ref{tabla1} for detailed information.

\begin{figure}[h]
	\centering
	\includegraphics[width=10cm]{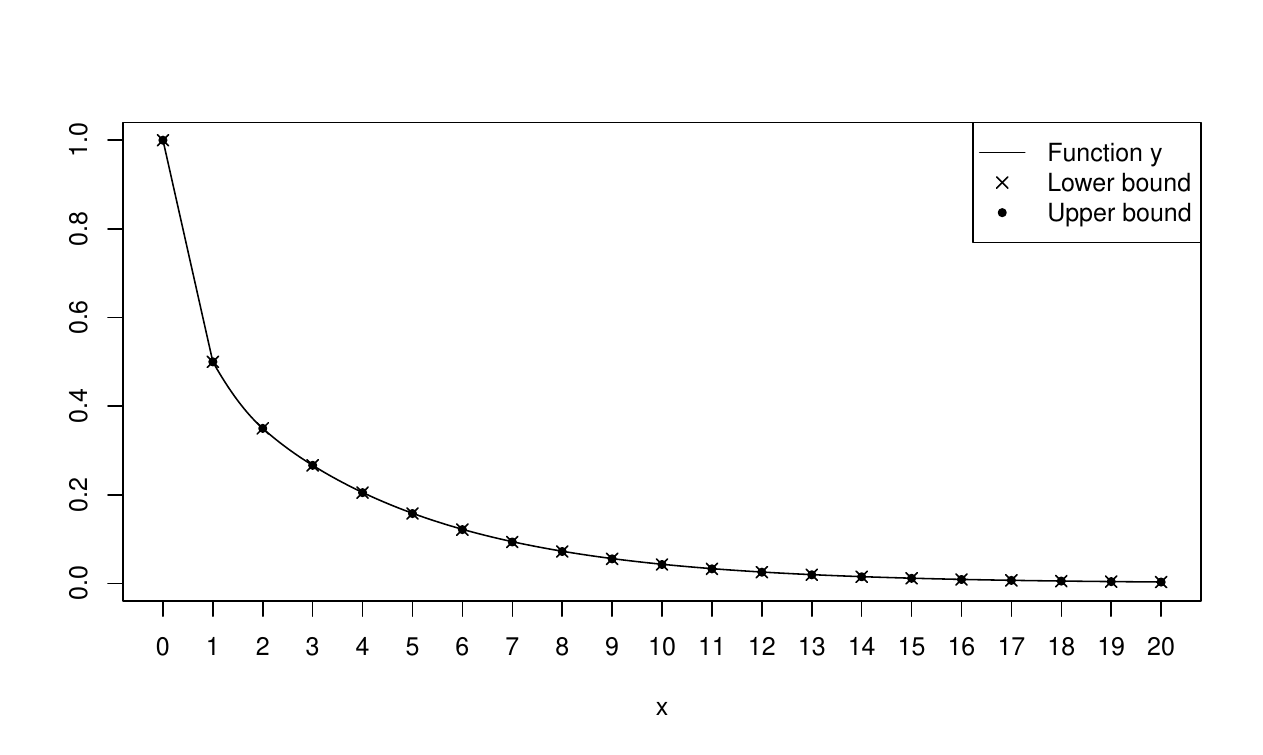}\vspace{-10pt}
	\caption{\footnotesize Function $y$ with lower and upper bounds for $c=0.2$, $\delta=1$ and $\varphi(x)=1-x/2$.}
	\label{figura2}
\end{figure}
\begin{table}[h!]\footnotesize
	\centering
	\begin{tabular}{|c|c|c|c|}
		\hline
		$x$ & Lower bound & $y(x)$ & Upper bound \\
		\hline
		0  & 1.0000 & 1.0000 & 1.0000 \\
		1  & 0.5000 & 0.5000 & 0.5000 \\
		2  & 0.3498 & 0.3498 & 0.3498 \\
		3  & 0.2665 & 0.2665 & 0.2665 \\
		4  & 0.2048 & 0.2053 & 0.2132 \\
		5  & 0.1577 & 0.1583 & 0.1705 \\
		6  & 0.1215 & 0.1222 & 0.1364 \\
		7  & 0.0935 & 0.0942 & 0.1091 \\
		8  & 0.0720 & 0.0727 & 0.0873 \\
		9  & 0.0555 & 0.0561 & 0.0699 \\
		10 & 0.0427 & 0.0433 & 0.0559 \\
		11 & 0.0329 & 0.0334 & 0.0447 \\
		12 & 0.0253 & 0.0257 & 0.0358 \\
		13 & 0.0195 & 0.0199 & 0.0286 \\
		14 & 0.0150 & 0.0153 & 0.0229 \\
		15 & 0.0116 & 0.0118 & 0.0183 \\
		16 & 0.0089 & 0.0091 & 0.0146 \\
		17 & 0.0069 & 0.0070 & 0.0117 \\
		18 & 0.0053 & 0.0054 & 0.0094 \\
		19 & 0.0041 & 0.0042 & 0.0075 \\
		20 & 0.0031 & 0.0032 & 0.0060 \\
		\hline
	\end{tabular}\vspace{5pt}
	\caption{\footnotesize Function $y$ with lower and upper bounds for $c=0.2$, $\delta=1$ and $\varphi(x)=1-x/2$.}
	\label{tabla1}
\end{table}
In the following result we give an explicit formula for the Laplace transform of $F\in\Pb$ and a recurrence for its moments.
\begin{proposition}
	Let $F\in\Pb$ and let $X$ be a random variable with distribution $F$. Then 
	\begin{enumerate}
		\item $\mu_n:=E(X^n)<\infty$, for all $n\ge1$ and the following recurrence holds
		\begin{equation}
		\label{eq:recmom}		\mu_{n+1}=\frac{n+1}{c}\left((1-c\delta)\mu_n-\int_{0}^{\delta}t^nF(dt)\right)-\sum_{k=1}^{n-1}\binom{n+1}{k}\mu_k\delta^{n+1-k}.
		\end{equation}
		\item \begin{equation}
		\label{eq:genfun}
		E(e^{-uX})=\frac{\int_{0}^{\delta}e^{-ut}F(dt)
			+ce^{-u\delta}/u}{1+ce^{-u\delta}/u},\, u>0.
		\end{equation}
	\end{enumerate}
\end{proposition}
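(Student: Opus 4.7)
The plan is to exploit the defining identity $G(x+\delta) = c\int_x^\infty G(t)\,dt$ for $x \ge 0$, which is available by Lemma \ref{L1} in the continuous setting with $\suppF = \RR_+$. Differentiation of this identity shows that $F$ is absolutely continuous on $(\delta,\infty)$ with density $f(t) = cG(t-\delta)$, and both parts of the proposition will follow essentially as corollaries of this representation of the density.

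The first task for (a) is to establish $\mu_n < \infty$ for all $n$, which I would do by induction. The base case is covered by the integrability hypothesis on $X_1$; alternatively $\mu_1 = \int_0^\infty G(t)\,dt = G(\delta)/c$. For the inductive step, Tonelli's theorem gives
\[
\int_0^\infty x^{n-1}G(x+\delta)\,dx = c\int_0^\infty G(t)\int_0^t x^{n-1}\,dx\,dt = \frac{c\,\mu_{n+1}}{n(n+1)},
\]
while the substitution $y = x+\delta$ bounds the left-hand side above by $\int_0^\infty y^{n-1}G(y)\,dy = \mu_n/n$, yielding $\mu_{n+1}\le (n+1)\mu_n/c<\infty$. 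For the recurrence, the plan is to split $\mu_n = \nu_n + \int_\delta^\infty t^n f(t)\,dt$ with $\nu_n:=\int_0^\delta t^n F(dt)$, substitute $f(t)=cG(t-\delta)$, apply the shift $s=t-\delta$, and expand $(s+\delta)^n$ by the binomial theorem. Using $\int_0^\infty s^k G(s)\,ds = \mu_{k+1}/(k+1)$ and the elementary identity $\binom{n}{j-1}/j=\binom{n+1}{j}/(n+1)$ transforms the result into
\[
\frac{n+1}{c}(\mu_n-\nu_n)=\sum_{j=1}^{n+1}\binom{n+1}{j}\delta^{n+1-j}\mu_j.
\]
Peeling off the $j=n+1$ contribution (equal to $\mu_{n+1}$) and the $j=n$ contribution (equal to $(n+1)\delta\mu_n$), and then gathering the two $\mu_n$ terms into the factor $(1-c\delta)$, delivers exactly \eqref{eq:recmom}.

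For (b), the same split at the level of the Laplace transform, together with the shift $s=t-\delta$, gives
\[
E(e^{-uX}) = \int_0^\delta e^{-ut}F(dt) + ce^{-u\delta}\int_0^\infty e^{-us}G(s)\,ds.
\]
Substituting the classical identity $\int_0^\infty e^{-us}G(s)\,ds = (1-E(e^{-uX}))/u$, valid for $u>0$ since $X\ge 0$, produces the linear equation $(1+ce^{-u\delta}/u)E(e^{-uX}) = \int_0^\delta e^{-ut}F(dt) + ce^{-u\delta}/u$, which solves immediately to give \eqref{eq:genfun}. The computations throughout are routine; the only delicate point, handled by the induction in the previous paragraph, is securing finiteness of all moments so that the binomial expansion in the recurrence step is legitimate term by term.
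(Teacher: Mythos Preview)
Your proposal is correct and follows essentially the same route as the paper: split the integral at $\delta$, replace the density on $(\delta,\infty)$ by $cG(t-\delta)$ via the DDE, shift $s=t-\delta$, and then either expand $(s+\delta)^n$ binomially (your version) or integrate by parts once more to recover $E(X+\delta)^{n+1}$ (the paper's version); for part (b) both arguments reduce to the identity $\int_0^\infty e^{-us}G(s)\,ds=(1-E(e^{-uX}))/u$ and solve the resulting linear equation. Your treatment is in fact slightly more careful than the paper's, since you explicitly establish $\mu_n<\infty$ by induction before using it, whereas the paper's proof leaves this implicit.
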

\begin{proof}
	(a) Let $y$ be the solution to \eqref{eq:DDE}, with initial function $1-F$ on $[0,\delta]$. Then
	\begin{equation*}
	\begin{split}
	E(X^n)&=\int_{0}^{\infty}t^nF(dt)\\
	&=L_n-\int_{\delta}^{\infty}t^ny'(t)dt\\
	&=L_n+c\int_{0}^{\infty}(u+\delta)^ny(u)du\\
	&=L_n-c\int_{0}^{\infty}y'(x)\int_{0}^{x}(u+\delta)^ndudx\\
	&=L_n+\frac{c}{n+1}\left(E(X+\delta)^{n+1}-\delta^{n+1}\right),
	\end{split}
	\end{equation*}
	where $L_n=\int_{0}^{\delta}t^nF(dt)$. Recurrence \eqref{eq:recmom} is obtained by solving for $\mu_{n+1}$.
	
	(b) Formula \eqref{eq:genfun} follows from \eqref{eq:laplacey} in Lemma \ref{lem:laplacey}.
\end{proof}

\begin{remark}
A well-known result of S. Bernstein, related to Laplace transforms, states that a function $f:(0,\infty)\to\RR$ is the Laplace transform of a probability measure if and only if it is completely monotonic (c.m.) and $\lim_{t\to0^+}f(t)=1$;  see p. 417 in \cite{Fel}. Recall that $f$ is said to be c.m. if it has derivatives of all orders and $(-1)^nf^{(n)}(x)\ge0$, for all $n\ge0$ and $x>0$. Therefore, there is a bijection  between continuous $F\in\Pb$, with $\suppF=\RR_+$, and functions $\varphi\in\Phi$ such that \eqref{eq:laplacey} is c.m. This provides a way of checking whether an initial function $\varphi\in\Phi$ generates a solution to $\Pb$.
However, this criterion is hardly applicable in practice, as higher-order derivatives of \eqref{eq:laplacey} become unmanageable. 

On the other hand, given that we have the simple sufficient conditions \eqref{eq:suf} and \eqref{eq:suf2} for the positivity of the solution $y$ to \eqref{eq:DDE}, we can produce a range of examples of c.m. functions. For the sake of illustration, take  $\delta=1, c=1/5$ and $\varphi(t)=1-t/5$ for $t\in[0,\delta]$. Clearly, $\varphi\in\Phi$, and \eqref{eq:suf2} holds. Thus, from \eqref{eq:laplacey}, it follows that $\hat{y}(u)=(5u+e^{-u})^{-1}$ is c.m. However, $\hat{y}(u)=(2u+e^{-u})^{-1}$ is not c.m. because it corresponds to the case with $\delta=1, c=1/2$ and $\varphi(t)=1-t/2$ for $t\in[0,\delta]$, which has $c\delta> 1/e$ and so, $\Pb=\emptyset$, by Theorem \ref{thm:bijection}.
\end{remark}

\subsection{Lattice distributions}
This last section is devoted to the analysis of lattice solutions to problem $\Pb$, with $\delta>0$. For simplicity, we consider distributions with support $\ZZ_+$ and, without loss of generality, we take $\delta\in\ZZ_+$.

When $\suppF=\ZZ_+$, we have $T=\suppF$ (recall Definition \ref{def:ST}). Lemma \ref{L1} states that $F\in \Pb$ if and only if  $G(i+\delta)=c\int_{i}^{\infty}G(t)dt$, for all $i\in\ZZ_+$ or, equivalently, if $G(i)=c\int_{i-\delta}^{\infty}G(t)dt=c\sum_{j=i-\delta}^{\infty}G(j)$, for all $i\in\ZZ_+,i\ge\delta$. Thus, $G$ satisfies the difference equation $\Delta G(i)+cG(i-\delta)=0, i\ge\delta$, with initial condition $G(0)<1$, where $\Delta G(i):=G(i+1)-G(i)$ denotes the forward difference operator on $G$.

For any $F\in\Pb$, the values of $G$ on $\ZZ_+$ are determined by the values of $G$ on $\{0,\ldots,\delta\}$. That is, the solutions to $\Pb$ are parametrised by decreasing initial functions $\varphi$ defined on the discrete set $\{0, \ldots, \delta\}$, analogous to the initial functions defined on the interval $[0, \delta]$ in the continuous case. As we will see, there is a strong similarity between the results in the continuous and discrete settings.

Let the discrete initial value problem be defined by
\begin{equation}
	\label{eq:DdE}
	\Delta y(i)+cy(i-\delta)=0, \, i\ge\delta \quad\text{ and }\quad y(i)=\varphi(i), \, i=0,\ldots,\delta.
\end{equation}
Note that, if $y$ satisfies \eqref{eq:DdE}, then
\begin{equation}
	\label{eq:DdE1}
	y(i)=y(l)-c\sum_{j=l-\delta}^{i-\delta-1}y(j), \, i,l\in\ZZ_+ \text{ such that } i>l\ge\delta,
\end{equation}
which is the discrete analogue of \eqref{eq:DDE1}.

The (unique) solution to \eqref{eq:DdE} on $\{\delta+1,\ldots\}$ is computed as the extension of $\varphi$ using the discrete version of the method of steps, based on \eqref{eq:DdE1}. For example, the first step yields 
	\begin{equation}
	\label{eq:dG1}
	y(i)=\varphi(\delta)-c\sum_{j=0}^{i-\delta-1}\varphi(j), \,i=\delta+1,\ldots,2\delta.
\end{equation}
In general, for $k\ge1$,
\begin{equation}
	\label{eq:stepkd}
	y(i)=y(k\delta)-c\sum_{j=(k-1)\delta}^{i-\delta-1}y(j), \,i=k\delta+1, \ldots,(k+1)\delta.
\end{equation}
Since we are interested in solutions to \eqref{eq:DdE} that are survival functions, $\varphi$ must take values in $(0,1)$ and be strictly decreasing. The set of such functions is denoted by $\Phi_d$. In the discrete setting, the result similar to Theorem \ref {thm:bijection} can be stated as follows: 
\begin{theorem} 
	\label{thm:bijectiondis}
There exists a bijection between the set of  distributions $F\in\Pb$, with $\suppF=\ZZ_+$, and the set of positive solutions $y$ to \eqref{eq:DdE}, with  initial function $\varphi\in\Phi_d$. The bijection is given by $F=1-y$. In particular, $\Pb=\emptyset$  if and only if $c\delta>\left(\frac{\delta}{\delta+1}\right)^{\delta+1}$.
\end{theorem}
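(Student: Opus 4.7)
The plan is to mirror Theorem \ref{thm:bijection}, replacing DDE arguments with their discrete counterparts. For the bijection, I would first verify the forward direction: given $F \in \Pb$ with $\suppF = \ZZ_+$, Lemma \ref{L1}(i) together with the discussion preceding the theorem shows that $G := 1 - F$ solves \eqref{eq:DdE} with initial data $\varphi(i) = G(i)$ for $i = 0, \ldots, \delta$; since every nonnegative integer lies in the support, $G$ is strictly decreasing with values in $(0,1)$ on this range, so $\varphi \in \Phi_d$ and $G$ is a positive solution. For the reverse direction, given a positive $y$ satisfying \eqref{eq:DdE} with $\varphi \in \Phi_d$, the identity $\Delta y(i) = -c\,y(i-\delta) < 0$ gives strict decrease on $\ZZ_+$; summing the difference equation telescopically yields $c \sum_{j=0}^{\infty} y(j) = y(\delta)$, forcing $y(i) \to 0$. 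Hence $F = 1 - y$ is a proper cdf with $\suppF = \ZZ_+$, and because condition (c) of Lemma \ref{L1} holds by construction, $F \in \Pb$.

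Next I would pin down the threshold by the characteristic equation method. Searching for geometric solutions $y(i) = \lambda^i$ of \eqref{eq:DdE} reduces to the equation
\begin{equation*}
\lambda^{\delta}(1 - \lambda) = c.
\end{equation*}
On $[0,1]$ the function $f(\lambda) := \lambda^{\delta}(1 - \lambda)$ is unimodal with maximum at $\lambda^\star = \delta/(\delta+1)$, where $f(\lambda^\star) = (\delta/(\delta+1))^{\delta+1}/\delta$. Thus $f(\lambda) = c$ admits a root in $(0,1)$ if and only if $c\delta \le (\delta/(\delta+1))^{\delta+1}$. When it does, I would take such a root $\lambda$ and set $\varphi(i) = \lambda^i$: the corresponding positive solution $y(i) = \lambda^i$ gives the geometric distribution $F(i) = 1 - \lambda^{i+1}$ in $\Pb$, confirming $\Pb \ne \emptyset$.

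The main obstacle is the converse: showing that $c\delta > (\delta/(\delta+1))^{\delta+1}$ forces \eqref{eq:DdE} to have no positive solution, which is the discrete analog of Theorem \ref{thm:deltac<1/e} used in the continuous setting. For this I would appeal to the standard oscillation theorem for linear autonomous delay difference equations (available in \cite{LLZ} and similar references), which asserts that $\Delta y(i) + c\,y(i-\delta) = 0$ admits an eventually positive solution if and only if its characteristic equation has a positive real root. Combined with the unimodality computation above, this yields the stated threshold and closes the proof.
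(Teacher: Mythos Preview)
Your approach mirrors the paper's closely: the bijection argument is structurally the same (the paper invokes Lemma \ref{lem:dexpdecay} for $y(i)\to0$ where you telescope directly, but the two are equivalent and yours is arguably more elementary), and for the threshold both you and the paper ultimately rely on the standard oscillation criterion for autonomous delay difference equations (the paper packages it as Theorem \ref{thm:oscdis}, citing \cite{EZ} rather than \cite{LLZ}, and does not separate out the explicit geometric construction for the sufficiency direction as you do).

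One small slip to fix: your choice $\varphi(i)=\lambda^i$ has $\varphi(0)=1\notin(0,1)$, so $\varphi\notin\Phi_d$ and the resulting $F=1-y$ would have $P(X=0)=0$, hence $0\notin\suppF$. Simply rescale to $\varphi(i)=\lambda^{i+1}$, which is consistent with your stated $F(i)=1-\lambda^{i+1}$ and lies in $\Phi_d$; the rest of the construction then goes through unchanged.
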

\begin{proof}
Clearly, every $F\in\Pb$ with $\suppF=\ZZ_+$ is a solution to \eqref{eq:DdE}, with initial function $\varphi\in\Phi_d$ given by $\varphi(i)=1-F(i)$ for $i=0,\ldots,\delta$. Conversely, any positive solution $y$ to \eqref{eq:DdE}, with initial function $\varphi \in \Phi_d$, is strictly decreasing and, by Lemma \ref{lem:dexpdecay}, satisfies $y(i) \to 0$ as $i \to \infty$. Hence, $1-y$ is a distribution function in $\Pb$. Finally, by Theorem \ref{thm:oscdis}, if $c\delta>\left(\frac{\delta}{\delta+1}\right)^{\delta+1}$ there are no positive solutions to \eqref{eq:DdE}, so $\Pb=\emptyset$.
\end{proof}
We study necessary and sufficient conditions on the initial function $\varphi\in\Phi_d$ for the positivity of solutions to \eqref{eq:DdE}. In light of Theorem \ref{thm:bijectiondis}, we shall henceforth assume that $c\delta \le \left(\frac{\delta}{\delta+1}\right)^{\delta+1}$.

\subsubsection{Necessary condition}
\begin{proposition}
	\label{prop:dnec}
	If $y$ is a positive solution to problem \eqref{eq:DdE}, with initial function $\varphi\in\Phi_d$, then
\begin{equation}
\label{eq:dnec2}
\varphi(\delta)>\frac{c\delta}{1-c\delta}(S_1-c\delta S_2),
\end{equation}
where
	$S_1=\frac{1}{\delta}\sum_{j=0}^{\delta-1}\varphi(j)$ and $S_2=\frac{1}{\delta^2}\sum_{j=\delta}^{2\delta-1}\sum_{i=0}^{j-\delta-1}\varphi(i)$.
\end{proposition}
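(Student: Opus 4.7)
The plan is to mirror the strategy used in the continuous analogue, Proposition \ref{prop:nec}: derive the necessary condition by computing the value of $y$ at $3\delta$ via two iterations of the discrete method of steps, and then invoke positivity.

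First I would use \eqref{eq:dG1} to read off $y(j)$ for $j=\delta+1,\ldots,2\delta$, namely $y(j)=\varphi(\delta)-c\sum_{i=0}^{j-\delta-1}\varphi(i)$; in particular $y(2\delta)=\varphi(\delta)-c\delta S_1$. Applying \eqref{eq:stepkd} with $k=2$ and $i=3\delta$ then gives
\[
y(3\delta)=y(2\delta)-c\sum_{j=\delta}^{2\delta-1} y(j).
\]
Substituting the first-step expressions into this sum (and noting that the $j=\delta$ contribution to the double sum defining $S_2$ has an empty inner range and therefore vanishes) yields $\sum_{j=\delta}^{2\delta-1} y(j)=\delta\varphi(\delta)-c\delta^2 S_2$, so that after a short rearrangement
\[
y(3\delta)=(1-c\delta)\varphi(\delta)-c\delta\bigl(S_1-c\delta S_2\bigr).
\]

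Since we are working under the standing assumption $c\delta\le\bigl(\delta/(\delta+1)\bigr)^{\delta+1}<1$, the coefficient $1-c\delta$ is strictly positive; thus the (necessary) positivity of $y(3\delta)$, which follows from the hypothesis that $y$ is a positive solution, is equivalent to \eqref{eq:dnec2}, concluding the proof. I do not foresee any substantive obstacle: the argument is completely parallel to the continuous case and the work is essentially bookkeeping of summation indices. The only point to watch is that the index ranges in the telescoping double sum exactly match the definitions of $S_1$ and $S_2$ in the statement, together with the empty-sum convention at $j=\delta$.
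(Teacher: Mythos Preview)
Your proposal is correct and follows essentially the same approach as the paper: compute $y(3\delta)$ by two applications of the discrete method of steps \eqref{eq:dG1}--\eqref{eq:stepkd}, obtain $y(3\delta)=(1-c\delta)\varphi(\delta)-c\delta(S_1-c\delta S_2)$, and use $c\delta<1$ to see that $y(3\delta)>0$ is equivalent to \eqref{eq:dnec2}. The paper's presentation differs only cosmetically, writing the positivity of $y(3\delta)$ as an inequality before rearranging rather than first simplifying $y(3\delta)$ to a closed form.
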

\begin{proof}
Note that the rhs of \eqref{eq:dnec2} is positive   since $c\delta<1$ and $S_2\le S_1$. From \eqref{eq:dG1}, with $i=2\delta$, we get $y(2\delta)=\varphi(\delta)-c\delta S_1$.

Now, from \eqref{eq:stepkd}, with $k=2$ and $i=3\delta$, we find that
 $y(3\delta)>0$ is equivalent to $y(2\delta)> c\sum_{j=\delta}^{2\delta-1}y(j)$, which, in turn, is equivalent to
\begin{equation}
	\label{eq:dnec1.5}
	\begin{split}
		\varphi(\delta)- c\delta S_1&> c\sum_{j=\delta}^{2\delta-1}\left(\varphi(\delta)-c\sum_{i=0}^{j-\delta-1}\varphi(i)\right)
		\\
		&=c\delta\varphi(\delta)-c^2\sum_{j=\delta}^{2\delta-1}\sum_{i=0}^{j-\delta-1}\varphi(i)
		\\ &=c\delta\left(\varphi(\delta)-c\delta S_2\right).
		\end{split}
\end{equation}
Solving for $\varphi(\delta)$ in \eqref{eq:dnec1.5} and recalling that $c\delta<1$, we obtain \eqref{eq:dnec2}.
\end{proof}
\subsubsection{Sufficient condition}
 As in the continuous case, we derive sufficient conditions on $\varphi\in\Phi_d$ to ensure a positive solution. Again, the idea is to find a positive sequence $(a_n)$ that serves as a lower bound for $y$.
 We begin with  a technical lemma, analogous to Lemma \ref{lem:bn1}, and redefine parameter $a$ as $a=c(\delta+1)/2<1$.

 \begin{lemma}
 	\label{lem:dbn1}
 	Let $y$ be the solution to \eqref{eq:DdE}, with initial function $\varphi\in\Phi_d$. Define  $b_k=y((k+1)\delta)$, for $k\ge0$, and let $n\ge1$. If $b_k>0$, for $k=1,\ldots,n$, then $y$ is positive and decreasing, while $\Delta y$ is increasing on $J_k:=\{k\delta,\ldots,(k+1)\delta\}$, for $k=1,\ldots,n$. Moreover, 
 	\begin{equation*}
 		b_{k+1}\ge (1-a)b_{k}-ab_{k-1},
 	\end{equation*}
 	for $k=1,\ldots,n$.
 \end{lemma}
\begin{proof}
	The proof closely parallels that of Lemma \ref{lem:bn1} with certain details omitted for brevity. Let $p(n)=``b_k>0$, for $k=1,\ldots,n$'';  $q(n)=``y$ is positive and decreasing, while $\Delta y$ is increasing on $J_k$, for $k=1,\ldots,n$'' and $r(n)=``b_{k+1}\ge (1-a)b_{k}-ab_{k-1}$, for $k=1,\ldots,n$''. We  prove that $p(n)\impl q(n)$ and $q(n)\impl r(n)$, for all $n\ge1$.
	
	In the initial step of the induction assume, $b_1>0$ and observe from \eqref{eq:dG1} that $y$ decreases and $\Delta y$ increases on $J_1$, because $\varphi$ is positive and decreases on $\{0,\ldots,\delta\}$; so  $p(1)\impl q(1)$ holds. Now, suppose $p(n)\impl q(n)$ and assume that $p(n+1)$ holds. As $p(n+1)$ implies $p(n)$, we have $q(n)$ which, by \eqref{eq:stepkd} with $k=n+1$, yields $q(n+1)$ and the induction is complete.
	
	For the second implication, note that $y$ satisfies the hypotheses of Lemma \ref{lem:dconvex} on $J_k$ and hence the following bound holds for $k=1,\ldots,n$:
	\begin{equation*}
		\sum_{j=k\delta}^{(k+1)\delta-1}y(j)\le\sum_{j=k\delta}^{(k+1)\delta}y(j)\le \tfrac{\delta+1}{2}(y(k\delta)+y((k+1)\delta)).
	\end{equation*}
	The inequality above and \eqref{eq:stepkd}, with $k$ replaced by $k+1$, yield
	\begin{equation*}
		b_{k+1}=y((k+1)\delta)-c\!\!\!\!\sum_{j=k\delta}^{(k+1)\delta-1}y(j)\ge (1-a)y((k+1)\delta)-ay(k\delta)\!=\!(1-a)b_k-ab_{k-1},
	\end{equation*}
	for $k=1,\ldots,n$, and the proof is complete.
\end{proof}
\begin{proposition}
	\label{prop:dsuf} Suppose $a<3-2\sqrt{2}$, and let $y$ be the solution to \eqref{eq:DdE} with initial function $\varphi\in\Phi_d$. Then $y$ is positive if 
	\begin{equation*}
	\varphi(\delta)>\frac{2a}{1-\lambda_2-2a}((1-\lambda_2)S_1-2aS_2),
\end{equation*} 
where $\lambda_2=\tfrac{1}{2}(1-a-\sqrt{D})$, $D=(1-a)^2-4a$, and $S_1, S_2$ are defined in Proposition \ref{prop:dnec}.
\end{proposition}
\begin{proof}
	The proof is similar to that of Proposition \ref{prop:suf}, with the difference that now $a=c(\delta+1)/2$ instead of $c\delta/2$, and $b_1=y(2\delta)=\varphi(\delta)-c\delta S_1$, $b_2=y(3\delta)=(1-c\delta)\varphi(\delta)-c\delta(S_1-c\delta S_2)$. Thus, it suffices to prove the inequalities $b_1>0$ and $b_2>b_1\lambda_2$, which are satisfied under the hypotheses of the proposition.
\end{proof}

\begin{example}\label{geometrica}(Geometric distribution)
Let $F$ be the geometric distribution starting at 0 with parameter $p\in(0,1)$ (denoted $Geom(p)$). That is, $G(t)=1-F(t)=(1-p)^{\lfloor t\rfloor+1}, t\ge0$. From Lemma \ref{L1} (i), $F\in\Pb$ if and only if
	\begin{equation*}
	(1-p)^{k+\delta+1}=c\int_{k}^{\infty}G(t)dt=c\sum_{i=k}^{\infty}(1-p)^{i+1}=c\frac{(1-p)^{k+1}}{p}, k\ge0,
	\end{equation*}
	which is equivalent to $p(1-p)^{\delta}=c$. Therefore, the geometric distribution starting at 0, with parameter $p$, solves $\Pb$ if and only if $p$ is a solution to the equation $p(1-p)^{\delta}=c$. It is easy to check that a solution exists if and only if $c\delta\le \left(\frac{\delta}{\delta+1}\right)^{\delta+1}$. When $c\delta< \left(\frac{\delta}{\delta+1}\right)^{\delta+1}$, there are two solutions, $p_1$ and $p_2$, say. Thus $Geom(p_1)$ and $Geom(p_2)$ are solutions to $\Pb$. Moreover, given that \eqref{eq:DdE} is linear, the distributions with $G(t)=b_i(1-p_i)^{\lfloor t\rfloor+1}$, for $t\ge0$, where $b_i\in(0,1/(1-p_i))$, $i=1,2$, are also in $\Pb$. Thus, from the previous results, we find that all mixtures of a Dirac mass at 0 and the $Geom(p_i)$ distributions, $i = 1,2$, are solutions to $\Pb$. 
	
	When $c\delta= \left(\frac{\delta}{\delta+1}\right)^{\delta+1}$, the only real solution is $\frac{1}{\delta+1}$, and so, $Geom(\frac{1}{\delta+1})$ is a solution, as well as its mixture with the Dirac mass at 0. Moreover, there are solutions of the form  $G(t)=(\alpha\lfloor t\rfloor+1) \left(\frac{\delta}{\delta+1}\right)^{\lfloor t\rfloor+1}$, for $t\ge0$, with $\alpha\in(0, 1/\delta)$. Summarising, the mixtures of a Dirac mass at 0, a $Geom(\frac{1}{\delta+1})$, and a negative binomial distribution with parameters $2,\,(\delta+1)^{-1}$ are solutions to $\Pb$.
\end{example}

\begin{appendices}

\section{Technical results}
\label{sec:appendix}
In this appendix we collect definitions and technical results about delay differential equations and difference equations.
\subsection{Delay differential equations}
We define the fundamental function of \eqref{eq:DDE}. See \cite{AGA}, pages 3--5.
\begin{definition}
	\label{def:fundamental}
	The fundamental function of \eqref{eq:DDE}, denoted $y_0$, is defined as the solution to \eqref{eq:DDE}, with initial function $\varphi_0(t)=0$ for $t\in[0,\delta)$, and $\varphi_0(\delta)=1$.
\end{definition}
The following important result is used in Proposition \ref{prop:initial}. See \cite{AGA} for a proof.
\begin{theorem}
	\label{thm:fundamental>0}
	If $c\delta\le1/e$, then the fundamental function $y_0$ is positive.
\end{theorem}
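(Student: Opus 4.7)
The plan is to dominate the initial datum $\varphi_0$ by that of an explicit positive exponential solution of \eqref{eq:DDE}, and then apply the comparison Theorem \ref{thm:compare} to transfer positivity to $y_0$.

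The first step is to find a positive exponential solution via the characteristic equation. Substituting $y(t) = e^{\lambda t}$ into $y'(t) + cy(t-\delta) = 0$ yields $\lambda + c e^{-\lambda \delta} = 0$; with $\lambda = -\mu$ and $s = \mu\delta$ this becomes $se^{-s} = c\delta$. Since $s \mapsto se^{-s}$ attains its maximum $1/e$ at $s = 1$ on $[0,\infty)$, the hypothesis $c\delta \le 1/e$ guarantees a real root $\mu_1 > 0$ satisfying $\mu_1 = c e^{\mu_1 \delta}$ (the unique root $\mu_1 = 1/\delta$ in the boundary case $c\delta = 1/e$; the smaller positive root otherwise). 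A direct calculation using this identity shows that
\begin{equation*}
z(t) := e^{\mu_1(\delta - t)},\qquad t \ge 0,
\end{equation*}
is a strictly positive solution of \eqref{eq:DDE} with $z(\delta) = 1$ and $z(t) \ge 1$ for all $t \in [0, \delta]$.

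Next, I would compare $y_0$ with $z$ through their restrictions on $[0, \delta]$. Definition \ref{def:fundamental} gives $\varphi_0(t) = 0$ for $t \in [0, \delta)$ and $\varphi_0(\delta) = 1$, so $\varphi_0 \le z$ pointwise on $[0, \delta]$ with equality at $t = \delta$. Applying Theorem \ref{thm:compare} in the direction already used in Corollary \ref{cor:compare} — pointwise-smaller initial datum with equal right-endpoint value yields pointwise-larger solution on $[\delta, \infty)$ — we conclude $y_0(t) \ge z(t) > 0$ for all $t \ge \delta$, which is exactly the claim.

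The principal difficulty is the interplay between the existence of a positive real root of the characteristic equation (which forces the threshold $c\delta \le 1/e$ through the shape of $se^{-s}$) and the correct direction of the comparison theorem for a decay-type DDE. The latter is intrinsic to the equation: a smaller initial datum on $[0,\delta)$ produces a smaller derivative in magnitude on $[\delta, 2\delta]$, hence slower decrease downstream. The boundary case $c\delta = 1/e$ requires no separate treatment, as the argument goes through unchanged with $\mu_1 = 1/\delta$.
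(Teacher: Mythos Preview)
The paper does not prove this statement; it simply refers the reader to \cite{AGA}. So there is no proof to compare against, and your write-up must stand on its own.

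Your first step is exactly right and is the classical entry point: the characteristic equation $\mu=ce^{\mu\delta}$ has a real root precisely when $c\delta\le 1/e$, and the corresponding exponential $z(t)=e^{-\mu_1(t-\delta)}$ is a strictly positive solution with $z(\delta)=1$ and $z\ge 1$ on $[0,\delta]$.

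The gap is in the second step. You invoke Theorem~\ref{thm:compare}, but that theorem is also cited to \cite{AGA} (their Theorem~2.5), and in that source --- as in the standard DDE literature --- comparison results of this shape are \emph{derived from} the positivity of the fundamental function, via the solution-representation formula
\[
y(t)=y_0(t)\,\varphi(\delta)-c\int_0^{\delta} y_0(t-u)\,\varphi(u)\,du,\qquad t\ge\delta.
\]
Monotone dependence of $y$ on $\varphi$ (with the endpoint pinned) follows from this formula exactly when $y_0\ge 0$. The hypothesis ``$y_2>0$'' in Theorem~\ref{thm:compare} is, in the underlying theory, \emph{equivalent} to ``$y_0>0$'', and that equivalence is precisely the content of Theorem~\ref{thm:fundamental>0}. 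So your appeal to the comparison theorem is circular: the tool already consumes the conclusion you want.

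The non-circular argument in \cite{AGA}, once you have the positive exponential $z$, does not go through a comparison theorem. One sets $u(t)=-z'(t)/z(t)\equiv\mu_1$ and observes that $u$ satisfies the generalized characteristic inequality $u(t)\ge c\exp\!\bigl(\int_{t-\delta}^{t}u(s)\,ds\bigr)$; from such a $u$ one builds the fundamental function by a monotone iteration that stays positive (this is the implication ``(c)$\Rightarrow$(a)'' in their nonoscillation criterion, roughly Theorem~2.3/Corollary~2.1). That step is where the real work is, and it is not captured by a one-line invocation of Theorem~\ref{thm:compare}.
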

The analytical properties of the solutions of DDEs such as \eqref{eq:DDE} have been extensively studied in the literature. Here, we restate two important results concerning the positivity and subexponentiality of the solutions. See Definition 1.2.2, Theorem 2.1.2, and Corollary 2.1.1 in \cite{LLZ}.
\begin{theorem}
	\label{thm:deltac<1/e}
	Problem \eqref{eq:DDE} has a positive solution if and only if $c\delta\le 1/e$.
\end{theorem}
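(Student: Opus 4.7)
The plan is to prove both directions via the classical approach based on the characteristic equation associated with \eqref{eq:DDE}.

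For sufficiency, when $c\delta \le 1/e$, I would exhibit an explicit positive solution using the exponential ansatz $y(t) = e^{-\lambda t}$. Substituting into \eqref{eq:DDE} reduces the DDE to the characteristic equation $\lambda = c e^{\lambda\delta}$, equivalently $\lambda e^{-\lambda\delta} = c$. The function $\lambda \mapsto \lambda e^{-\lambda\delta}$ on $(0,\infty)$ attains its maximum value $1/(e\delta)$ at $\lambda = 1/\delta$, so a real positive root exists precisely when $c \le 1/(e\delta)$. With that $\lambda$, the function $y(t) = e^{-\lambda t}$ is positive and strictly decreasing on $[0,\infty)$, and corresponds to the initial function $\varphi(t) = e^{-\lambda t}$ on $[0,\delta]$.

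For necessity, I would argue by contradiction: assume $c\delta > 1/e$ and that $y$ is a positive solution of \eqref{eq:DDE}. The equation forces $y'(t) = -c\,y(t-\delta) < 0$ for $t \ge \delta$, so $y$ is strictly decreasing. Define $u(t) := -y'(t)/y(t) = c\,y(t-\delta)/y(t) > 0$ and integrate the identity $-(\log y)' = u$ over $[t-\delta,t]$ to obtain
\begin{equation*}
\log\!\bigl(u(t)/c\bigr) \;=\; \log y(t-\delta) - \log y(t) \;=\; \int_{t-\delta}^{t} u(s)\,ds,
\end{equation*}
so $u$ satisfies the generalized characteristic equation $u(t) = c\exp\!\bigl(\int_{t-\delta}^{t} u(s)\,ds\bigr)$. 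Since $y$ is decreasing, $u(t) \ge c$ for $t \ge \delta$. Setting $m := \liminf_{t\to\infty} u(t) \ge c$ and passing to the liminf in the integral identity via Fatou, one obtains $m \ge c\,e^{\delta m}$, i.e., $m\,e^{-\delta m} \ge c$. Since $\max_{m>0} m\,e^{-\delta m} = 1/(e\delta) < c$ by assumption, this is a contradiction.

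The main obstacle is making the necessity step fully rigorous: one must rule out the possibility $m = +\infty$, i.e., show that $u$ is bounded above, in order for the liminf inequality to be meaningful. This requires an auxiliary argument, typically showing that an unbounded $u$ would force $y$ to decay faster than any exponential, which is incompatible with the fixed-point relation $u(t)\,y(t) = c\,y(t-\delta)$. This boundedness step is the technical heart of the classical proof, carried out in full detail in \cite{LLZ}; the sufficiency direction, by contrast, is elementary.
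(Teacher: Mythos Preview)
The paper does not prove this theorem; it simply cites \cite{LLZ} (Theorem 2.1.2 and Corollary 2.1.1). Your sketch is essentially the classical argument found in that and similar references: sufficiency via an exponential solution coming from a real root of the characteristic equation $\lambda=ce^{\lambda\delta}$, and necessity via the generalised characteristic equation satisfied by $u(t)=-y'(t)/y(t)$. Your identification of the finiteness of $m=\liminf_{t\to\infty}u(t)$ as the delicate step is accurate, and your deferral to \cite{LLZ} for that step matches what the paper itself does. One minor point of terminology: the liminf passage is not really Fatou's lemma; it is the elementary observation that for any $\epsilon>0$ one eventually has $u(s)\ge m-\epsilon$ on $[t-\delta,t]$, whence $u(t)\ge c\,e^{\delta(m-\epsilon)}$ and so $m\ge c\,e^{\delta m}$.
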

\begin{lemma}
	\label{lem:expdecay}
	Let $y$ be a solution of \eqref{eq:DDE} with continuous initial function $\varphi$. 
	
	(i) If $c\delta<\pi/2$, then there exist positive constants $M$ and $\nu$ such that
	\begin{equation}
	\label{eq:expdecay}
	|y(t)|\le Me^{-\nu(t-\delta)}, \,  t\ge\delta.
	\end{equation}
	Moreover, 
	
	(ii) if $y$ is a positive solution of \eqref{eq:DDE}, then $y$ is decreasing and
	\begin{equation}
		\label{eq:Gron}
		 y(t)\le y(\delta)e^{-c(t-\delta)},\, t\ge\delta.
	\end{equation}
\end{lemma}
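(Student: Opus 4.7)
My approach for part (i) is to invoke the classical theory of linear autonomous DDEs via the characteristic equation $h(\lambda)=\lambda+ce^{-\lambda\delta}=0$. By the Hayes--Pontryagin condition (see \cite{LLZ}, Theorem 2.1.2 and Corollary 2.1.1), the hypothesis $c\delta<\pi/2$ together with $c>0$ forces every root of $h$ to have strictly negative real part. Since the spectrum is discrete and the real parts of the roots diverge to $-\infty$, one obtains a uniform spectral gap $\nu>0$ with $-\nu\ge\sup_n\mathrm{Re}(\lambda_n)$, and the variation-of-constants representation of solutions to \eqref{eq:DDE} then yields $|y(t)|\le Me^{-\nu(t-\delta)}$ for some $M$ depending on $\varphi$. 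Rather than redoing the spectral analysis, my plan is simply to cite \cite{LLZ}.

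For part (ii) I would argue directly from the equation on $[\delta,\infty)$. Positivity of $y$ together with $y'(t)=-cy(t-\delta)$ gives $y'(t)<0$ for all $t\ge\delta$, so $y$ is strictly decreasing on $[\delta,\infty)$. To prove \eqref{eq:Gron}, I would introduce $u(t)=y(t)e^{c(t-\delta)}$; a direct computation using \eqref{eq:DDE} yields
\begin{equation*}
u'(t)=c\,e^{c(t-\delta)}\bigl(y(t)-y(t-\delta)\bigr),\quad t\ge\delta.
\end{equation*}
The task is then to show $u'\le 0$ on $[\delta,\infty)$, so that $u(t)\le u(\delta)=y(\delta)$ and \eqref{eq:Gron} follows. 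For $t\ge 2\delta$ this is immediate, since both $t-\delta$ and $t$ lie in $[\delta,\infty)$ where $y$ is already known to be decreasing, giving $y(t-\delta)\ge y(t)$. For $t\in[\delta,2\delta]$ one has $y(t-\delta)=\varphi(t-\delta)$, and the monotonicity of $\varphi$ on $[0,\delta]$, which is built into the class $\Phi$ where the lemma is applied in Theorems \ref{thm:bijection} and \ref{thm:bijectiondis}, gives $\varphi(t-\delta)\ge\varphi(\delta)=y(\delta)\ge y(t)$, closing the argument.

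The main obstacle is part (i): the sharp critical value $c\delta=\pi/2$ reflects a genuinely nontrivial spectral fact that is not short to derive from first principles, so the honest strategy is to treat it as a black-box citation from the DDE literature. Part (ii), by contrast, is essentially elementary once the sign of $y'$ on $[\delta,\infty)$ is pinned down; the only subtle point is the boundary interval $[\delta,2\delta]$, where the bound on $y(t-\delta)$ is not directly available from the decreasingness of $y$ and must be furnished by monotonicity of the initial datum.
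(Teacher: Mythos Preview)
Your proposal is correct and essentially the same as the paper's. For (i) the paper simply cites Lemma~2.1.1 in \cite{LLZ}, and for (ii) it derives $y'(t)\le -cy(t)$ from $y(t-\delta)\ge y(t)$ and invokes Gr\"onwall's lemma---your substitution $u(t)=y(t)e^{c(t-\delta)}$ is precisely the explicit form of that Gr\"onwall step, and your separate treatment of the interval $[\delta,2\delta]$ using $\varphi\in\Phi$ is a careful point the paper leaves implicit.
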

\begin{proof}
	For a proof of \eqref{eq:expdecay}, see  Lemma 2.1.1 in \cite{LLZ}. For \eqref{eq:Gron}, note that $y$ is decreasing on $(\delta, \infty)$ since $y(t-\delta)>0$. Now, as $y(t-\delta)\ge y(t)$, we 	
	 obtain $y'(t)\le -cy(t),t\ge\delta$, which yields \eqref{eq:Gron} by Gr{\"o}nwall's lemma (see \cite{GRO}, p. 293).
\end{proof}
The following is a general comparison theorem for DDEs, not requiring $\varphi_1, \varphi_2\in\Phi$. The initial functions can be, for example, non-decreasing or negative.

\begin{theorem}
	\label{thm:compare}
	Let $y_1, y_2$ be the solutions to problem \eqref{eq:DDE} with respective initial functions $\varphi_1, \varphi_2$, such that $y_2(t)>0$ for all $t>\delta$. If $\varphi_1(t)\le\varphi_2(t)$, for all $t\in[0,\delta]$ and $\varphi_1(\delta)= \varphi_2(\delta)$, then $y_1(t)\ge y_2(t)>0$, for all $t\ge\delta$.
\end{theorem}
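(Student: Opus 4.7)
The plan is to reduce the theorem to the positivity of the fundamental function $y_0$ (Theorem \ref{thm:fundamental>0}) by establishing an explicit integral representation of solutions. Specifically, I claim that every solution $y$ of \eqref{eq:DDE} with initial function $\varphi$ admits the formula
\[
y(t) = \varphi(\delta)\, y_0(t) - c\int_0^\delta y_0(t-s)\,\varphi(s)\,ds, \quad t \geq \delta,
\]
where $y_0$ is extended by $0$ on $[0, \delta)$. Applying this to $w = y_1 - y_2$, the hypothesis $\varphi_1(\delta) = \varphi_2(\delta)$ kills the first term, so
\[
w(t) = -c\int_0^\delta y_0(t-s)\bigl(\varphi_1(s) - \varphi_2(s)\bigr)\,ds, \quad t \geq \delta.
\]
Since $y_2$ is a positive solution, Theorem \ref{thm:deltac<1/e} forces $c\delta \leq 1/e$, and then Theorem \ref{thm:fundamental>0} gives $y_0 \geq 0$; together with $\varphi_1 - \varphi_2 \leq 0$, this shows $w(t) \geq 0$, whence $y_1(t) \geq y_2(t) > 0$ on $[\delta, \infty)$, as required.

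To prove the representation I would induct on the intervals $[k\delta, (k+1)\delta]$, $k \geq 1$. The base case $k = 1$ is immediate from \eqref{eq:step1}: on $[\delta, 2\delta]$ one has $y_0(t) = 1$ and $y_0(t-s) = \mathbf{1}_{\{s \leq t-\delta\}}$, so the right-hand side of the claimed formula reduces to $\varphi(\delta) - c\int_0^{t-\delta}\varphi(s)\,ds$, which matches \eqref{eq:step1} exactly. For the inductive step I would apply the method-of-steps identity \eqref{eq:stepk} to both $y$ and $y_0$ on the same interval, substitute the induction hypothesis for $y(s)$, and interchange the order of integration via Fubini in the resulting double integral $\int \int y_0(s-r)\varphi(r)\,dr\,ds$; the inner integral then collapses by applying the corresponding method-of-steps identity for $y_0$ at the shifted argument $t - r$, producing exactly the desired right-hand side.

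The hard part will be the book-keeping in the inductive step: one must carefully track the split of the integration domain arising from the piecewise character of $y_0$ (vanishing on $[0, \delta)$ with a jump to $1$ at $\delta$) when the shifted argument $t - s$ straddles $\delta$, and verify that the Fubini interchange and the subsequent method-of-steps identity for $y_0$ recombine to give the single-integral form. This is routine but tedious. An alternative route that bypasses the induction entirely is via Laplace transforms: formula \eqref{eq:laplacey} from Lemma \ref{lem:laplacey}, together with its counterpart for $y_0$ (initial function $\varphi_0$), shows that both sides of the representation have the same Laplace transform, so the identity follows by uniqueness of inversion, using that Lemma \ref{lem:expdecay}(i) guarantees both sides decay exponentially.
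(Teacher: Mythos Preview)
Your argument is correct. The paper's own proof is a one-line citation of Theorem 2.5 in \cite{AGA}, whereas you supply a self-contained argument via the variation-of-constants representation $y(t)=\varphi(\delta)\,y_0(t)-c\int_0^\delta y_0(t-s)\varphi(s)\,ds$. This is not really a different route so much as an unpacking of the cited one: that representation, combined with positivity of the fundamental function, is exactly the mechanism behind the comparison theorems in \cite{AGA}. The payoff of your version is that it makes explicit the link between Theorems \ref{thm:fundamental>0} and \ref{thm:compare}, which the paper leaves opaque by citing both from the same source without comment. One small point worth tightening: you invoke Theorem \ref{thm:deltac<1/e} to obtain $c\delta\le 1/e$ from the positivity of $y_2$ on $(\delta,\infty)$; as stated in the paper that theorem speaks simply of ``a positive solution'', but the underlying result in \cite{LLZ} is a nonoscillation criterion and applies to any eventually positive solution, so the deduction goes through.
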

\begin{proof}
	The result is a particular case of Theorem 2.5 in \cite{AGA}.
\end{proof}
\subsubsection{The Laplace-Stieltjes transform}
The Laplace-Stieltjes transform is a classical tool in the analysis of linear DDE. The simplicity of \eqref{eq:DDE} allows for an explicit formula in terms of the initial function $\varphi$. This is, of course, well-known; see \cite{BC}.
\begin{definition}
	\label{def:laplace}
	Let $y$ be a solution to \eqref{eq:DDE} with initial function $\varphi$ of bounded variation. The Laplace-Stieltjes transform of $y$ is defined as
	\begin{equation*}
	\hat{y}(u)=-\int_0^\infty e^{-ut}y(dt), u>0.
	\end{equation*}
\end{definition}
If $\varphi\in\Phi$ and $c\delta\le 1/e$, as in Section \ref{sec:continuous}, then $\varphi$ is of bounded variation and, by  Lemma \ref{lem:expdecay} (i), the integral defining $\hat{y}(u)$ converges  for all $u>0$. In the next lemma, we present a compact formula for $\hat{y}$.
\begin{lemma}
	\label{lem:laplacey}
	Using the notation and conditions of Definition \ref{def:laplace}, 
	\begin{equation}
	\label{eq:laplacey}
	\hat{y}(u)=\frac{L_\varphi(u)+ce^{-u\delta}/u}{1+ce^{-u\delta}/u},\, u>0,
	\end{equation}
	where $L_\varphi(u)=-\int_{0}^{\delta}e^{-ut}\varphi(dt)$.
\end{lemma}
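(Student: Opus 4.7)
The plan is to compute $\hat y(u)$ directly from its definition by splitting the Stieltjes integral at $\delta$, using the DDE to rewrite the piece on $[\delta,\infty)$, and then solving an algebraic equation for $\hat y(u)$.

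First, I would split
\begin{equation*}
\hat y(u)=-\int_0^\delta e^{-ut}y(dt)-\int_\delta^\infty e^{-ut}y(dt).
\end{equation*}
On $[0,\delta]$ we have $y=\varphi$, so the first term is exactly $L_\varphi(u)$. On $[\delta,\infty)$ the solution $y$ is absolutely continuous and satisfies $y'(t)=-cy(t-\delta)$, so $y(dt)=-cy(t-\delta)\,dt$. Performing the change of variable $s=t-\delta$, the second piece becomes
\begin{equation*}
-\int_\delta^\infty e^{-ut}y'(t)\,dt=c\int_\delta^\infty e^{-ut}y(t-\delta)\,dt=ce^{-u\delta}\int_0^\infty e^{-us}y(s)\,ds.
\end{equation*}

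Next I would express $\int_0^\infty e^{-us}y(s)\,ds$ in terms of $\hat y(u)$. Integration by parts in the Stieltjes sense gives
\begin{equation*}
\int_0^\infty e^{-us}y(s)\,ds=\Bigl[-\tfrac{e^{-us}}{u}y(s)\Bigr]_0^\infty+\tfrac{1}{u}\int_0^\infty e^{-us}y(ds)=\tfrac{1}{u}\bigl(1-\hat y(u)\bigr),
\end{equation*}
where I use $y(0)=\varphi(0)=1$, and the fact that $y(s)e^{-us}\to0$ as $s\to\infty$, which is guaranteed by the exponential decay in Lemma \ref{lem:expdecay}(i) (valid because $c\delta\le1/e<\pi/2$). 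This justifies both the convergence of $\hat y(u)$ for all $u>0$ and the vanishing of the boundary term at infinity.

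Combining the pieces yields
\begin{equation*}
\hat y(u)=L_\varphi(u)+\tfrac{ce^{-u\delta}}{u}\bigl(1-\hat y(u)\bigr),
\end{equation*}
and solving this linear equation for $\hat y(u)$ gives \eqref{eq:laplacey}. The main obstacle, such as it is, lies in carefully justifying the integration by parts when $\varphi$ is only of bounded variation (so that $y$ may fail to be differentiable on $[0,\delta]$); this is handled by writing everything as a Riemann--Stieltjes integral, exploiting that $y$ is continuous at $0$ with $y(0)=1$ and that the boundary term at infinity vanishes by Lemma \ref{lem:expdecay}(i).
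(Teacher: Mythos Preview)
Your proof is correct and follows essentially the same route as the paper: split at $\delta$, use the DDE on $[\delta,\infty)$, change variables, and then relate $\int_0^\infty e^{-us}y(s)\,ds$ to $\hat y(u)$ before solving the resulting linear equation. The only cosmetic difference is that the paper obtains $\int_0^\infty e^{-us}y(s)\,ds=\tfrac{1}{u}(1-\hat y(u))$ by writing $y(s)=y(0)+\int_0^s y'(t)\,dt$ and swapping the order of integration, whereas you reach the same identity via integration by parts.
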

\begin{proof}
	From  \eqref{eq:DDE} we have
	\begin{equation*}
	\begin{split}
	\hat{y}(u)&=L_\varphi(u)+c\int_{\delta}^{\infty}e^{-ut}y(t-\delta)dt\\
	&=L_\varphi(u)+ce^{-u\delta}\int_{0}^{\infty}e^{-us}y(s)ds\\
	&=L_\varphi(u)+ce^{-u\delta}\int_{0}^{\infty}e^{-us}\left[\int_{0}^{s}y'(t)dt+y(0)\right]ds\\
	&=L_\varphi(u)+\tfrac{ce^{-u\delta}}{u}(1-\hat{y}(u)).
	\end{split}
	\end{equation*}
	Then, solving for $\hat{y}$, we  get \eqref{eq:laplacey}.
\end{proof}
\subsection{Difference equations}
We present definitions and results for difference equations that closely parallel those for DDEs.
\begin{theorem}
\label{thm:oscdis}
	Problem \eqref{eq:DdE} has a positive solution if and only if 
	$c\delta\le \left(\frac{\delta}{\delta+1}\right)^{\delta+1}$.
\end{theorem}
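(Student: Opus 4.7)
The plan is to reduce \eqref{eq:DdE} to its characteristic equation, in close parallel with the DDE argument of Theorem \ref{thm:deltac<1/e}. Plugging the ansatz $y(i)=\lambda^i$ into $\Delta y(i)+cy(i-\delta)=0$ and dividing by $\lambda^{i-\delta}$ gives
\begin{equation*}
\lambda^{\delta+1}-\lambda^\delta+c=0, \qquad \text{or equivalently,} \qquad c=\lambda^\delta(1-\lambda).
\end{equation*}
A routine calculation shows that $f(\lambda):=\lambda^\delta(1-\lambda)$ on $[0,1]$ attains its maximum at $\lambda^\star=\delta/(\delta+1)$, with $f(\lambda^\star)=\delta^\delta/(\delta+1)^{\delta+1}$. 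Hence the stated threshold $c\delta\le\bigl(\delta/(\delta+1)\bigr)^{\delta+1}$ is exactly $c\le f(\lambda^\star)$, i.e., the condition that the characteristic equation admits a root $\lambda\in(0,1)$.

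For the sufficiency direction, given such a root $\lambda\in(0,1)$, the sequence $y(i)=\lambda^i$ is positive, satisfies \eqref{eq:DdE} by direct substitution, and its restriction $\varphi(i)=\lambda^i$, $i=0,\ldots,\delta$, clearly lies in $\Phi_d$.

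For the necessity direction, suppose $y$ is a positive solution. Since $\Delta y(i)=-cy(i-\delta)<0$, $y$ is strictly decreasing with limit $L\ge 0$; passing to the limit in the recurrence forces $L=0$, as otherwise $0=-cL<0$. Introduce the ratio $w(i)=y(i+1)/y(i)$, which lies in $(0,1)$ because $y$ is positive and strictly decreasing. Dividing the recurrence by $y(i)$ yields
\begin{equation*}
w(i)=1-\frac{c}{\prod_{j=i-\delta}^{i-1}w(j)}, \qquad i\ge\delta.
\end{equation*}
Set $\overline{w}=\limsup_i w(i)\in[0,1]$. I would first verify that $\overline{w}\in(0,1)$: if $\overline{w}=0$ the product in the denominator tends to $0$, forcing $w(i)\to-\infty$; if $\overline{w}=1$ the product would need to tend to infinity along a subsequence, which is impossible since each factor is below $1$. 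By Bolzano--Weierstrass applied to $[0,1]^{\delta+1}$, extract a subsequence $(i_k)$ along which $w(i_k)\to\overline{w}$ and $w(i_k-j)\to\ell_j\in[0,\overline{w}]$ for $j=1,\ldots,\delta$. Passing to the limit in the recurrence gives $\overline{w}=1-c/\prod_j\ell_j$, hence $c=(1-\overline{w})\prod_j\ell_j\le(1-\overline{w})\overline{w}^\delta=f(\overline{w})\le f(\lambda^\star)$, which is the desired bound.

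I expect the main technical obstacle to be the justification of the subsequence-limit step: pinning down $\overline{w}\in(0,1)$ and legitimising the simultaneous passage to the limit of $w(i_k)$ and of the tuple $(w(i_k-1),\ldots,w(i_k-\delta))$, so that the denominator in the recurrence does not degenerate. These are standard compactness arguments but require care. Once they are in place, the elementary maximisation of $f$ closes the loop, mirroring exactly the treatment of Theorem \ref{thm:deltac<1/e} in the continuous case.
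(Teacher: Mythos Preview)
The paper does not actually prove this theorem: its entire proof is the one-line citation ``See Theorems 2.1--2.3 in \cite{EZ}.'' So there is no internal argument to compare with, and your self-contained proof goes well beyond what the paper supplies.

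Your argument is essentially correct and follows the classical route in oscillation theory for delay difference equations (which is, in fact, the content of the cited Erbe--Zhang results). The sufficiency via the characteristic root $\lambda\in(0,1)$ is immediate; note only that $\varphi(0)=\lambda^0=1\notin(0,1)$, so strictly speaking $\varphi\notin\Phi_d$, but the theorem asks only for a positive solution to \eqref{eq:DdE}, not for $\varphi\in\Phi_d$, so $y(i)=\lambda^i$ suffices (and in any case a scalar multiple $b\lambda^i$ with $b\in(0,1)$ would land in $\Phi_d$). For necessity, your ratio/limsup argument is sound; the points you flag as ``technical obstacles'' are indeed the only ones requiring care, and both go through: $\overline{w}<1$ follows directly from $w(i)=1-c\,y(i-\delta)/y(i)<1-c$ (since $y$ is strictly decreasing), while $\overline{w}>0$ is forced because $w(i)\to0$ would make the product in the denominator arbitrarily small and hence $w(i)<0$ eventually. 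In the subsequence step you should also note that each $\ell_j>0$, since otherwise $\prod_j\ell_j=0$ would force $w(i_k)\to-\infty$; once that is recorded, the passage to the limit and the bound $c=(1-\overline{w})\prod_j\ell_j\le(1-\overline{w})\overline{w}^\delta\le f(\lambda^\star)$ are clean.

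In short: your proposal is a valid, self-contained proof, whereas the paper outsources the result to the literature.
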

\begin{proof}
	See Theorems 2.1--2.3 in \cite{EZ}.
\end{proof}
\begin{lemma}
	\label{lem:dexpdecay}
	If $y$ is a positive solution of  \eqref{eq:DdE}, then $y$ is decreasing and
	\begin{equation}
		\label{eq:dGron}
		y(k)\le y(\delta)(1-c)^{k-\delta}, \, k\ge\delta.
	\end{equation}
\end{lemma}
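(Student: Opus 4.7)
The plan is to mirror the continuous proof of Lemma \ref{lem:expdecay} (ii), using monotonicity of $y$ together with the defining recursion, and then closing the argument with a discrete Gr\"onwall-type iteration in place of the continuous one.

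First I would establish that $y$ is decreasing on $\ZZ_+$. From \eqref{eq:DdE} we have $\Delta y(i) = -c\,y(i-\delta)$ for $i\ge\delta$, and since $y$ is assumed positive, this immediately gives $\Delta y(i)<0$ for all $i\ge\delta$, so $y$ is strictly decreasing on $\{\delta,\delta+1,\ldots\}$. On $\{0,\ldots,\delta\}$ the values of $y$ coincide with the initial function $\varphi\in\Phi_d$, which is strictly decreasing by definition. Combined with $y(\delta+1)<y(\delta)$, this yields that $y$ is decreasing on all of $\ZZ_+$.

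Second, I would use this monotonicity to bound $y(i-\delta)$ from below by $y(i)$ for every $i\ge\delta$: since $i-\delta\le i$ and $y$ is decreasing on $\ZZ_+$, we have $y(i-\delta)\ge y(i)$. Plugging this into the difference equation gives
\begin{equation*}
y(i+1)=y(i)-c\,y(i-\delta)\le (1-c)\,y(i),\quad i\ge\delta.
\end{equation*}
Iterating this inequality starting from $i=\delta$ yields, for every integer $j\ge0$,
\begin{equation*}
y(\delta+j)\le (1-c)^{j}\,y(\delta),
\end{equation*}
which is exactly \eqref{eq:dGron} after setting $k=\delta+j$.

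The argument has no real obstacle: the only subtlety is that $y$ must be monotone on the whole of $\ZZ_+$ (not only from $\delta$ onwards) in order to handle the range $i\in\{\delta,\ldots,2\delta-1\}$, where the shifted argument $i-\delta$ lies in the initial window. This is guaranteed by the assumption $\varphi\in\Phi_d$ and is the discrete counterpart of the elementary observation in the continuous case. Everything else reduces to a one-line geometric iteration, which serves as the discrete analogue of Gr\"onwall's lemma invoked in \eqref{eq:Gron}.
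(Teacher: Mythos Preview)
Your proof is correct and follows essentially the same approach as the paper: derive $\Delta y(i)<0$ from positivity, use the resulting monotonicity to get $y(i-\delta)\ge y(i)$, and iterate $y(i+1)\le(1-c)\,y(i)$. The only cosmetic difference is that you spell out monotonicity on the initial window via $\varphi\in\Phi_d$, whereas the paper instead invokes Theorem~\ref{thm:oscdis} to record that positivity forces $c\delta\le(\delta/(\delta+1))^{\delta+1}$ (in particular $c<1$), which legitimises the geometric iteration.
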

\begin{proof}
	For \eqref{eq:dGron}, note that $y$ is decreasing on $\{\delta,\delta+1,\ldots\}$, since $y(k-\delta)>0$ for $k\ge\delta$. Also, since $y(k-\delta)\ge y(k)$, we get $\Delta y(k)\le -cy(k)$ for $k\ge\delta$, which yields \eqref{eq:dGron}. Finally, by Theorem \ref{thm:oscdis}, the existence of a positive solution to \eqref{eq:DdE} implies $c\delta\le \left(\frac{\delta}{\delta+1}\right)^{\delta+1}$.
\end{proof}

The following result states conditions for the positivity of the solution to a recurrence. It is mainly used in the proof of Proposition \ref{prop:suf}.
\begin{lemma} 
	\label{lem:rec}
	Let $a, a_0, a_1$ be positive constants such that $a<3-2\sqrt{2}$ and $a_1>a_0\lambda_2$, where $\lambda_2=\tfrac{1}{2}(1-a-\sqrt{D})$ and $D=(1-a)^2-4a$. Then the recurrence
	\begin{equation}
		\label{eq:rec}
		a_n=(1-a)\,a_{n-1}-a\,a_{n-2},\, n\ge2,
	\end{equation}
	has a positive solution $(a_n)$.
\end{lemma}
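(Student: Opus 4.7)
The plan is to exploit the fact that the recurrence is linear with constant coefficients, so its behaviour is governed entirely by the roots of the associated characteristic polynomial $P(\lambda)=\lambda^{2}-(1-a)\lambda+a$. By construction $\lambda_{2}$ is one of its roots; the other is $\lambda_{1}=\tfrac{1}{2}(1-a+\sqrt{D})$, and Vieta's formulas give $\lambda_{1}+\lambda_{2}=1-a$ and $\lambda_{1}\lambda_{2}=a$.

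First I would verify that $\lambda_{1}>\lambda_{2}>0$ under the hypothesis $a<3-2\sqrt{2}$. The discriminant factors as $D=a^{2}-6a+1=(a-(3-2\sqrt 2))(a-(3+2\sqrt 2))$, which is strictly positive for $a\in(0,3-2\sqrt 2)$, so the roots are real and distinct. Positivity of both roots then follows from $\lambda_{1}\lambda_{2}=a>0$ and $\lambda_{1}+\lambda_{2}=1-a>0$ (note $a<3-2\sqrt 2<1$).

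The central step is an induction on $n\ge 1$ showing that $a_{n}>\lambda_{2}\,a_{n-1}$. The base case $a_{1}>\lambda_{2}\,a_{0}$ is precisely the hypothesis. For the inductive step, assuming $a_{n-1}>\lambda_{2}\,a_{n-2}$, one computes
\begin{equation*}
a_{n}-\lambda_{2}\,a_{n-1}=(1-a-\lambda_{2})\,a_{n-1}-a\,a_{n-2}=\lambda_{1}\,a_{n-1}-\lambda_{1}\lambda_{2}\,a_{n-2}=\lambda_{1}\bigl(a_{n-1}-\lambda_{2}\,a_{n-2}\bigr)>0,
\end{equation*}
where I used $1-a-\lambda_{2}=\lambda_{1}$, $a=\lambda_{1}\lambda_{2}$, the inductive hypothesis, and $\lambda_{1}>0$. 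Combining $a_{n}>\lambda_{2}\,a_{n-1}$ with $\lambda_{2}>0$ and iterating, one obtains $a_{n}>\lambda_{2}^{\,n}\,a_{0}>0$ for every $n\ge 0$, which is the desired positivity.

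I do not expect any real obstacle here: the argument is essentially elementary once the characteristic roots are identified. The only point requiring minor care is checking that the hypothesis $a<3-2\sqrt{2}$ is exactly the threshold making $D>0$ and both characteristic roots positive; everything else is a clean two-line induction using Vieta's relations. Note that an alternative route is to write the explicit solution $a_{n}=A\lambda_{1}^{n}+B\lambda_{2}^{n}$ with $A=(a_{1}-\lambda_{2}a_{0})/(\lambda_{1}-\lambda_{2})>0$ and argue via $a_{n}/\lambda_{2}^{n}=A(\lambda_{1}/\lambda_{2})^{n}+B$, which is non-decreasing in $n$ and equals $a_{0}>0$ at $n=0$; but the inductive version above avoids the case split on the sign of $B$ and seems preferable.
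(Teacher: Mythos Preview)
Your proof is correct. The paper takes precisely the explicit-formula route you mention as an alternative at the end: it writes $a_n = A\lambda_1^n + B\lambda_2^n$ with $A=(a_1-\lambda_2 a_0)/\sqrt{D}>0$ and then splits on the sign of $B$; when $B<0$ it observes that $A + B(\lambda_2/\lambda_1)^n$ is minimised at $n=0$, where it equals $A+B=a_0>0$. Your inductive argument via $a_n-\lambda_2 a_{n-1}=\lambda_1(a_{n-1}-\lambda_2 a_{n-2})$ is cleaner in that it avoids the case split entirely and never needs the closed form. That said, the paper reuses the explicit formula elsewhere (for the computable bounds on $G(n\delta)$ in Proposition~\ref{prop:aprox} and in the proof of Lemma~\ref{lem:bn2}), so from the authors' point of view recording it in this lemma is not wasted effort.
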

\begin{proof}
	The characteristic polynomial of \eqref{eq:rec} is $p(x)=x^2-(1-a) x+a$, and has roots 
	\begin{equation*}
		\lambda_1=\tfrac{1}{2}(1-a+\sqrt{D}),\quad \lambda_2=\tfrac{1}{2}(1-a-\sqrt{D}).
	\end{equation*}
	Clearly, $a<3-2\sqrt{2}$ implies that $\lambda_1, \lambda_2$ are real and satisfy $0<\lambda_2<\lambda_1<1$. Then (see Lemma 1 in \cite{HHH}),
	\begin{equation}
		\label{eq:Hal}
		a_n=A\lambda_1^n+B\lambda_2^n, \, n\ge0,
	\end{equation}
	where $A=\frac{a_1-a_0\lambda_2}{\sqrt{D}}$ and $B=\frac{a_0\lambda_1-a_1}{\sqrt{D}}$. 
	Our assumption $a_1>a_0\lambda_2$ implies $A>0$. Now, if $a_0\lambda_1-a_1\ge0$, then $B\ge0$, and we have $a_n>0$ for all $n\ge0$. Otherwise, if $B<0$, then since $\lambda_2/\lambda_1<1$, the minimum value of $A+B\left(\frac{\lambda_2}{\lambda_1}\right)^n$ is achieved at $n = 0$. At this point, it equals $A + B = a_0 > 0$, thus proving the claim.
\end{proof}

\begin{lemma}
	\label{lem:bn2}
	Let $(a_n)$ be the solution to recurrence \eqref{eq:rec} under the hypotheses of Lemma \ref{lem:rec}. Let $n\ge2$, $x_0=a_0, x_1=a_1$, and $x_2,\ldots,x_n\in\RR$  such that 
	\begin{equation}
		\label{eq:recb}
		x_k\ge (1-a)x_{k-1}-ax_{k-2},\, k=2,\ldots,n.
	\end{equation}
	Then $x_k\ge a_k, \text{ for } k=0,\ldots,n$.
\end{lemma}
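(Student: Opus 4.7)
The plan is to argue by an inductive comparison of the ``defect'' sequence $d_k := x_k - a_k$, exploiting the factorisation of the characteristic polynomial of \eqref{eq:rec}. The initial conditions give $d_0 = d_1 = 0$, and we aim to show $d_k \geq 0$ for all $k = 0, \ldots, n$.

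A direct induction fails, because subtracting \eqref{eq:rec} from \eqref{eq:recb} only yields
\begin{equation*}
d_k \geq (1-a)\,d_{k-1} - a\,d_{k-2},\quad k = 2,\ldots,n,
\end{equation*}
and the negative coefficient on $d_{k-2}$ precludes concluding $d_k \geq 0$ from $d_{k-1},d_{k-2} \geq 0$ alone. This is the main obstacle, and the way around it is to diagonalise. Using Vieta's relations $\lambda_1+\lambda_2 = 1-a$ and $\lambda_1\lambda_2 = a$ (both inherited from the characteristic polynomial $p(x) = x^2 - (1-a)x + a$ used in the proof of Lemma \ref{lem:rec}), the inequality above can be rewritten as
\begin{equation*}
d_k - \lambda_2 d_{k-1} \geq \lambda_1\bigl(d_{k-1} - \lambda_2 d_{k-2}\bigr),\quad k = 2,\ldots,n.
\end{equation*}

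Thus the auxiliary sequence $e_k := d_k - \lambda_2 d_{k-1}$ satisfies $e_k \geq \lambda_1 e_{k-1}$, and since $\lambda_1 > 0$ we may iterate to obtain $e_k \geq \lambda_1^{k-1} e_1 = 0$, using $e_1 = d_1 - \lambda_2 d_0 = 0$. Equivalently, $d_k \geq \lambda_2 d_{k-1}$ for $k = 1,\ldots,n$. Since $\lambda_2 > 0$ and $d_0 = 0$, a trivial induction then yields $d_k \geq 0$ for all $k = 0,\ldots,n$, i.e.\ $x_k \geq a_k$, as desired.

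All the ingredients needed (realness and positivity of $\lambda_1,\lambda_2$, and the relations $\lambda_1+\lambda_2 = 1-a$, $\lambda_1\lambda_2 = a$) are guaranteed by the hypothesis $a < 3 - 2\sqrt{2}$, which is already invoked in Lemma \ref{lem:rec}; no further quantitative estimates are required.
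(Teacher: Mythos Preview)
Your proof is correct and is actually cleaner than the paper's. You diagonalise the second-order inequality for the defect $d_k=x_k-a_k$ into two first-order ones via the factorisation $(1-a)d_{k-1}-ad_{k-2}=(\lambda_1+\lambda_2)d_{k-1}-\lambda_1\lambda_2 d_{k-2}$, and then run two trivial inductions (first on $e_k=d_k-\lambda_2 d_{k-1}$, then on $d_k$). The paper takes a more computational route: it repeatedly substitutes the inequality $x_{k}\ge(1-a)x_{k-1}-ax_{k-2}$ into itself to unwind all the way back to $x_0,x_1$, obtaining $x_{n+1}\ge\alpha_n x_1-a\alpha_{n-1}x_0$, where the coefficients $(\alpha_k)$ themselves satisfy recurrence \eqref{eq:rec} with $\alpha_0=1,\ \alpha_1=1-a$; it then invokes Lemma \ref{lem:rec} a second time to show $\alpha_k>0$ (so the unwinding is legitimate), computes the closed form $\alpha_k=(\lambda_1^{k+1}-\lambda_2^{k+1})/\sqrt{D}$, and verifies directly that $\alpha_n a_1-a\alpha_{n-1}a_0=a_{n+1}$. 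Your argument avoids the auxiliary sequence $(\alpha_k)$ and the explicit closed-form identity, at the cost of being slightly less self-contained (the reader must recognise the Vieta trick); the paper's approach is longer but makes the mechanism behind the bound fully explicit.
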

\begin{proof}
We argue by induction. The statement is obviously true for $n=0,1$, so let us assume that \eqref{eq:recb} and  $x_k\ge a_k$ hold for $k=2,\ldots,n$. If  $x_{n+1}\ge (1-a)x_{n}-ax_{n-1}$, then
	\begin{equation}
		\label{eq:disentangle}
		\begin{split}
		x_{n+1}&\ge (1-a)((1-a)x_{n-1}-ax_{n-2})-ax_{n-1}\\
					&=((1-a)^2-a)x_{n-1}-a(1-a)x_{n-2}\\
					&\ge (1-a)(((1-a)^2-a)-a)x_{n-2}-a((1-a)^2-a)x_{n-3}\\
					&\vdots\\
					&\ge\alpha_kx_{n-k+1}-a\alpha_{k-1}x_{n-k},
		\end{split}
	\end{equation}
	where $\alpha_0=1$, $\alpha_1=1-a$, and  
	\begin{equation}
		\label{eq:recalpha}
		\alpha_k=(1-a)\alpha_{k-1}-a\alpha_{k-2},\, k=2,\ldots,n.
	\end{equation}
	The iterative procedure in display \eqref{eq:disentangle}, used to disentangle the recurrent inequalities, is justified only if the coefficients $\alpha_k$ are positive. 

	Observe that the recurrence \eqref{eq:recalpha} defining the sequence $(\alpha_k)$ for $k \geq 2$ is identical to \eqref{eq:rec}. Thus, we can apply Lemma \ref{lem:rec} to this sequence with $\alpha_0=1$ and $\alpha_1=1-a$. Noting that $\alpha_0>0$ and $\alpha_1=1-a>(1-a-\sqrt{D})/2=\alpha_0\lambda_2$, we conclude that $\alpha_k>0$ for all $k\ge0$.
Therefore, the iterative scheme in \eqref{eq:disentangle} is justified. 
Note also, from \eqref{eq:Hal} applied to the sequence $(\alpha_k)$, that
\begin{equation}
\label{eq:an}
\alpha_k=\frac{1-a-\lambda_2}{\sqrt{D}}\lambda_1^k+\frac{\lambda_1-1+a}{\sqrt{D}}\lambda_2^k
= \frac{\lambda_1^{k+1}-\lambda_2^{k+1}}{\sqrt{D}}, \, k\ge2.
\end{equation}
Finally, taking $k=n$ and noting that $\lambda_1\lambda_2=a$, from \eqref{eq:disentangle} and \eqref{eq:an}, we obtain
		\begin{equation*}
		\begin{split}
			x_{n+1}&\ge \alpha_nx_1-a\alpha_{n-1}x_0\\
			&=\alpha_na_1-a\alpha_{n-1}a_0\\
			&=\frac{\lambda_1^{n+1}-\lambda_2^{n+1}}{\sqrt{D}}a_1-\lambda_1\lambda_2\frac{\lambda_1^{n}-\lambda_2^{n}}{\sqrt{D}}a_0\\
			&=a_{n+1}.
		\end{split}
	\end{equation*}
	Therefore, the proof of the inductive step is complete.
\end{proof}
\begin{lemma}
	\label{lem:dconvex} Let $m,n\in\ZZ_+, n\ge1$, and $A=\{m,\ldots,m+n\}$. Let $g:\ZZ_+\to\RR_+$ be a decreasing and discrete-convex function on $A$, in the sense that $g(m)>\cdots>g(m+n)$ and $\Delta g(m)\le\cdots\le\Delta g(m+n-1)$. Then
	\begin{equation}
		\label{eq:dconvex}
		\sum_{i=m}^{m+n}g(i)\le\tfrac{n+1}{2}(g(m)+g(m+n)).
	\end{equation}
\end{lemma}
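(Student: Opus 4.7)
\textbf{Proof plan for Lemma \ref{lem:dconvex}.} The statement is the discrete trapezoidal inequality for convex sequences, so the plan is to adapt the standard continuous argument: bound $g$ pointwise by the chord through its endpoints on $A$ and sum. To keep notation light, I would first reindex by setting $h(i)=g(m+i)$ for $i=0,\ldots,n$; the hypothesis then becomes that $\Delta h(i)=h(i+1)-h(i)$ is non-decreasing in $i\in\{0,\ldots,n-1\}$ (the monotonicity of $g$ itself plays no role in the inequality and is only present because it matches how Lemma \ref{lem:dbn1} applies this result).

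The key step is the discrete chord inequality
\begin{equation*}
h(i)\le \frac{n-i}{n}\,h(0)+\frac{i}{n}\,h(n),\quad 0\le i\le n.
\end{equation*}
This follows from comparing the average of the first $i$ forward differences with the average of the last $n-i$: since $\Delta h$ is non-decreasing,
\begin{equation*}
\frac{h(i)-h(0)}{i}=\frac{1}{i}\sum_{k=0}^{i-1}\Delta h(k)\le \frac{1}{n-i}\sum_{k=i}^{n-1}\Delta h(k)=\frac{h(n)-h(i)}{n-i},
\end{equation*}
for $0<i<n$, and rearranging gives the claim (the endpoints $i=0,n$ are trivial).

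Finally, summing the chord inequality over $i=0,\ldots,n$ yields
\begin{equation*}
\sum_{i=0}^{n}h(i)\le h(0)\sum_{i=0}^{n}\frac{n-i}{n}+h(n)\sum_{i=0}^{n}\frac{i}{n}=\frac{n+1}{2}\bigl(h(0)+h(n)\bigr),
\end{equation*}
which is \eqref{eq:dconvex} after returning to $g$. There is no real obstacle here; the only point to be careful about is the case distinction in the chord inequality (whose proof is vacuous when $i=0$ or $i=n$, and whose denominator is zero when $i=n$), so one should state it as a trivial equality in those cases.
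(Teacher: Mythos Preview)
Your argument is correct and follows essentially the same route as the paper: bound $g$ on $A$ by the affine chord through the endpoints $(m,g(m))$ and $(m+n,g(m+n))$, then sum. The only difference is that the paper simply asserts the chord bound from discrete convexity, whereas you supply the forward-difference averaging proof of it.
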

\begin{proof} Let $h$ be the function on $[m,m+n]$ defined by the straight line joining the points with coordinates $(m,g(m))$ and $(m+n,g(m+n))$. That is, $h(x)=g(m)-\frac{1}{n}(g(m)-g(m+n))(x-m), x\in[m,m+n]$. Since $g$ is discrete-convex on $A$, we have $g(i)\le h(i)$ for all $i\in A$, and 
		\begin{equation*}
	\sum_{i=m}^{m+n}g(i)\le\sum_{i=m}^{m+n}h(i)=(n+1)g(m)-\tfrac{1}{n}(g(m)-g(m+n))\sum_{k=0}^{n}k,
	\end{equation*}
	which yields \eqref{eq:dconvex}.
\end{proof}

\end{appendices}

\section*{Declarations}

\textbf{Conflict of interest} The authors declare no conflict of interest.

\noindent\textbf{Funding} This research was partially funded by grants PID2023-150234NB-I00 and TED2021-130702BI00 from the Ministry of Science and Innovation. The authors are members of the research group \textit{E46\_23R: Modelos Estoc\'asticos} of DGA (Arag\'on, Spain).


\begin{thebibliography}{3}
\bibitem{AGA}
Agarwal RP, Berezansky L, Braverman E, Domoshnitsky A (2012)
Nonoscillation theory of functional differential equations with applications.
Springer, New York. \url{https://doi.org/10.1007/978-1-4614-3455-9}

\bibitem{Ahmadi2020}
Ahmadi J (2020)
Characterization results for symmetric continuous distributions based on the properties of $k$-records and spacings.
Stat Probab Lett 162:108764. \url{https://doi.org/10.1016/j.spl.2020.108764}

\bibitem{Ahmadi2021}
Ahmadi J (2021)
Characterization of continuous symmetric distributions using information measures of records.
Stat Pap 62:2603--2626. \url{https://doi.org/10.1007/s00362-020-01206-z}

\bibitem{A2017}
Ahsanullah M (2017)
Characterizations of univariate continuous distributions.
Atlantis Press, Paris. \url{https://doi.org/10.2991/978-94-6239-139-0}

\bibitem{AhsanullahAliev2011}
Ahsanullah M, Aliev F (2011)
A characterization of geometric distribution based on weak records.
Stat Pap 52:651--655. \url{https://doi.org/10.1007/s00362-009-0274-0}




\bibitem{ABN}
Arnold BC, Balakrishnan N, Nagaraja HN (2011)
Records. Wiley, New York. \url{https://doi.org/10.1002/9781118150412}

\bibitem{AV}
Arnold BC, Villase\~nor JA (2023)
Characterization of the geometric distribution via linear combinations of observations and of records.
Sankhya A 85:651--657. \url{https://doi.org/10.1007/s13171-021-00271-2}

\bibitem{BC}
Bellman R, Cooke KL (1963)
Differential-difference equations. Academic Press, London.

\bibitem{Bieniek2026}
Bieniek M (2026)
An application of the Wa\.zewski principle to the characterization of continuous probability distributions.
J Differ Equations 453:113817. \url{https://doi.org/10.1016/j.jde.2025.113817}

\bibitem{BieniekMaciag2018}
Bieniek M, Maci\k{a}g K (2018)
On the problem of the unique characterization of discrete distributions by single regression of weak records.
Statistics 52:533--551. \url{https://doi.org/10.1080/02331888.2018.1469022}


\bibitem{CastanoLopezSalamanca2013}
Casta\~no-Mart\'inez A, L\'opez-Bl\'azquez F, Salamanca-Mi\~no B (2013)
An additive property of weak records from geometric distributions.
Metrika 76:449--458. \url{https://doi.org/10.1007/s00184-012-0398-4}


\bibitem{DG}
Durrett RT, Ghurye SG (1976)
Waiting times without memory.
J Appl Probab 13:65--75. \url{https://doi.org/10.2307/3212666}

\bibitem{EZ}
Erbe L, Zhang BG (1989)
Oscillation of discrete analogues of delay equations.
Differ Integral Equ 2:300--309. \url{https://doi.org/10.57262/die/1372428799}

\bibitem{Fel}
Feller W (1971)
An introduction to probability theory and its applications, vol II, 2nd edn.
Wiley, New York.

\bibitem{GLS07a}
Gouet R, L\'opez FJ, Sanz G (2007)
A characteristic martingale related to the counting process of records.
J Theor Probab 20:443--455. \url{https://doi.org/10.1007/s10959-007-0065-2}

\bibitem{GLS07b}
Gouet R, L\'opez FJ, Sanz G (2007)
Asymptotic normality for the counting process of weak records and $\delta$-records in discrete models.
Bernoulli 13:754--781. \url{https://doi.org/10.3150/07-BEJ6027}

\bibitem{GLS12b}
Gouet R, L\'opez FJ, Sanz G (2012)
On $\delta$-record observations: asymptotic rates for the counting process and elements of maximum likelihood estimation.
Test 21:188--214. \url{https://doi.org/10.1007/s11749-011-0242-6}

\bibitem{GLS14}
Gouet R, L\'opez FJ, Maldonado LP, Sanz G (2014)
Statistical inference for the geometric distribution based on $\delta$-records.
Comput Stat Data Anal 78:21--32. \url{https://doi.org/10.1016/j.csda.2014.04.002}

\bibitem{GRO}
Gronwall TH (1919)
Note on the derivatives with respect to a parameter of the solutions of a system of differential equations.
Ann Math 20:292--296. \url{https://doi.org/10.2307/1967124}

\bibitem{GC}
Gupta N, Chaudhary SK (2024)
Some characterizations of continuous symmetric distributions based on extropy of record values.
Stat Pap 65:291--308. \url{https://doi.org/10.1007/s00362-022-01392-y}


\bibitem{HL}
Hale JK, Verduyn Lunel SM (1993)
Introduction to functional differential equations.
Springer, New York. \url{https://doi.org/10.1007/978-1-4612-4342-7}


\bibitem{HHH}
Halava V, Harju T, Hirvensalo M (2006)
Positivity of second order linear recurrent sequences.
Discrete Appl Math 154:447--451. \url{https://doi.org/10.1016/j.dam.2005.10.009}

\bibitem{Jasinski2018}
Jasi\'nski K (2018)
Characterizations of geometric distributions based on $k$th record values.
Statistics 52:1116--1127. \url{https://doi.org/10.1080/02331888.2018.1495209}

\bibitem{Jasinski2019}
Jasi\'nski K (2019)
Weak $k$th records from geometric distribution and some characterizations.
Commun Stat Theory Methods 48:6179--6187. \url{https://doi.org/10.1080/03610926.2018.1529245}

\bibitem{KhanMustafa2024}
Khan MI, Mustafa A (2024)
Characterization of family of Rayleigh distribution through record values.
Int J Anal Appl 22:82. \url{https://doi.org/10.28924/2291-8639-22-2024-82}



\bibitem{LLZ}
Ladde GS, Lakshmikantham V, Zhang BG (1987)
Oscillation theory of differential equations with deviating arguments.
Dekker, New York.


\bibitem{Lee2020}
Lee MY (2020)
A characterization of Weibull distribution by record values.
J Appl Math Informatics 38:91--97. \url{https://doi.org/10.14317/jami.2020.091}

\bibitem{LS}
L\'opez-Bl\'azquez F, Salamanca-Mi\~no B (2015)
Distribution theory of $\delta$-record values: case $\delta \ge 0$.
Test 24:558--582. \url{https://doi.org/10.1007/s11749-014-0424-0}

\bibitem{Mei}
Meilijson I (1972)
Limiting properties of the mean residual lifetime function.
Ann Math Statist 43:354--357. \url{https://doi.org/10.1214/aoms/1177692731}

\bibitem{OncelAliev2016}
\"Oncel SY, Aliev F (2016)
Characterizations of geometric and discrete Pareto distributions based on the conditional distribution of $k$th records.
J Stat Theory Appl 15:367--372. \url{https://doi.org/10.2991/jsta.2016.15.4.4}

\bibitem{Shanbhag2021}
Shanbhag DN (2021)
Generalized integrated Cauchy functional equation with applications to probability models.
J Stat Theory Pract 15:71. \url{https://doi.org/10.1007/s42519-021-00206-y}

\bibitem{Ver}
Vervaat W (1973)
Limit theorems for records from discrete distributions.
Stochastic Process Appl 1:317--334. \url{https://doi.org/10.1016/0304-4149(73)90015-X}

\bibitem{WL}
Weso{\l}owski J, L\'opez-Bl\'azquez F (2004)
Linearity of regression for the past weak and ordinary records.
Statistics 38:457--464. \url{https://doi.org/10.1080/02331880412331319288}
\end{thebibliography}
\end{document}